\newtheorem{theorem}{Theorem}[section]
\newtheorem{definition}[theorem]{Definition}
\newtheorem{lemma}[theorem]{Lemma}
\newtheorem{remark}[theorem]{Remark}
\theoremstyle{definition}
\theoremstyle{remark}
\numberwithin{equation}{section}
\begin{document}
\title[Some Subspaces of an FK Space and Deferred Ces\`{a}ro Conullity]{Some Subspaces of an FK Space and Deferred Ces\`{a}ro Conullity }
\author{\.{I}. Da\v{g}adur}
\address{Mersin University, Faculty of Science and Literature, Department of
Mathematics, 33343 Mersin - TURKEY.}
\email{seydasezgek@gmail.com}
\email{ilhandagadur@yahoo.com}
\author{\c{S}. SEZGEK}
\maketitle

\begin{abstract}
In this paper, we construct new important the subspaces $D_{p}^{q}S$, $D_{p}^{q}W$, $D_{p}^{q}F$ and $D_{p}^{q}B$ for a locally convex FK-space $X$ containing $\phi$, the space of finite sequences. Then we show that there is relation among these subspaces. Also, we study deferred Ces\`{a}ro conullity of one FK-space with respect to another, and we give some important results. Finally, we examine the deferred Ces\`{a}ro conullity of the absolute summability domain $l_A$, and show that if $l_A$ is deferred Ces\`{a}ro conull, then $A$ cannot be $l$-replaceable.
\end{abstract}

~~  

\textbf{2000 Mathematics Subject Classification:} 46A45, 40A05, 40C05, 40D05%
\newline

\textbf{Keywords and phrases:} deferred Ces\`{a}ro mean, deferred Ces\`{a}ro conull, FK-space, AK-space, $\sigma_{p}^{q} [K]$-space, $\sigma_{p}^{q} [B]$-space %

\section{introduction}

Let $w$ denote the space of all complex valued sequences. It can be topologized with the seminorms $r_n(x)=|x_n|$, $n=1,2,\ldots$, and any vector subspace $X$ of $w$ is a sequence space. A sequence space $X$ with a vector space topology $\tau$ is a K-space provided that the inclusion map $i:(X,\tau)\to w$, $i(x)=x$, is continuous. If, in addition, $\tau$ is complete, matrizable, locally convex then $(X,\tau)$ is called FK-space. So an FK-space is a complete, metrizable locally convex topological vector space of sequences for which the coordinate functionals are continuous. An FK-space whose topology is normable is called a BK-space. The basic properties of FK-space may be found in (see \cite{boos}, \cite{wilansky} and \cite{zeller}).  

By $c$, $c_0$, $l_{\infty}$ we denote the spaces of convergent sequences, null sequences and bounded sequences, respectively. These are FK-spaces under $\|x\|=\sup_n|x_n|$.  By $cs$, $l$ we denote the spaces of all summable sequences, absolute summable sequences, respectively. 

Throughout this paper $e$ denotes the sequences of ones; $\delta^j$ $(j=1,2,\ldots)$ the sequence with the one in the $j$-th position; $\phi$ the linear span of $\delta^j$'s.  The linear span of $\phi$ and $e$ is denoted by $\phi_1$. The topological dual of $X$ is denoted by $X'$. The space $X$ is said to have AD if $\phi$ is dense in $X$. A sequence $x$ in a locally convex sequence space $X$ is said the property AK if $x^{(n)}\to x$ in $X$ where $x^{(n)}=\sum_{k=1}^{n}x_k\delta^k$.

We recall (see \cite{boos} and \cite{wilansky}) that the $f$, $\beta$-duals of a subset $X$ of $w$ are
\begin{eqnarray}
&&X^f=\left\{ \{f(\delta^k)\} : f\in X' \right\}, \nonumber \\
X^{\beta}&=&\left\{x\in w : \sum_{n=1}^{\infty}x_ky_k \text{ is convergent for all } y\in X \right\}  . \nonumber \end{eqnarray}

In 1932, Agnew \cite{agnew} defined the deferred Ces\`{a}ro mean $D_{p,q}$ of the sequences $x$ by
\begin{equation*}
(D_{p,q}x)_n=\frac{1}{q(n)-p(n)}\sum_{k=p(n)+1}^{q(n)}x_k \  \  \ 
\end{equation*}
where $\{p(n)\}$ and $\{q(n)\}$ are sequences of nonnegative integers satisfying the conditions $p(n)<q(n)$ and $\underset{n\to \infty}{\lim}q(n)=\infty$. We note here that $D_{p,q}$ is clearly regular for any choice of $\{p(n)\}$ and $\{q(n)\}$. The deferred Ces\`{a}ro mean is used throughout this paper. We define some new sequence space by using deferred Ces\`{a}ro mean.

The sequence spaces
\begin{equation*}
[\sigma_0]_{p}^{q}:= \left\{ x\in w : \lim_n \frac{1}{q(n)-p(n)} \sum_{k=p(n)+1}^{q(n)}x_k  =0 \right\},
\end{equation*}
\begin{equation*}
[\sigma_c]_{p}^{q}:= \left\{ x\in w : \lim_n \frac{1}{q(n)-p(n)} \sum_{k=p(n)+1}^{q(n)}x_k ~~ \text{  exists} \right\},
\end{equation*}
\begin{equation*}
[\sigma_{\infty}]_{p}^{q}:= \left\{ x\in w : \sup_n  \frac{1}{q(n)-p(n)}\left| \sum_{k=p(n)+1}^{q(n)}x_k \right| <\infty \right\},
\end{equation*}

\begin{equation*}
\sigma_{p}^{q} [s]:= \left\{ x\in w : \lim_n \frac{1}{q(n)-p(n)} \sum_{k=p(n)+1}^{q(n)}\sum_{j=1}^{k}x_j  ~~ \text{  exists} \right\}
\end{equation*}
and
\begin{equation*}
\sigma_{p}^{q} [b]:= \left\{ x\in w : \sup_n \left|  \frac{1}{q(n)-p(n)} \sum_{k=p(n)+1}^{q(n)}\sum_{j=1}^{k}x_j\right|  <\infty \right\}
\end{equation*}
are BK-spaces with the norms
\begin{equation*}
\parallel x\parallel_{[\sigma_0]_{p}^{q}}=\sup_n\left| \frac{1}{q(n)-p(n)} \sum_{k=p(n)+1}^{q(n)}x_k  \right| \  
\end{equation*}
and
\begin{equation*}
\parallel x\parallel_{\sigma_{p}^{q} [s]}=\sup_n\left| \frac{1}{q(n)-p(n)} \sum_{k=p(n)+1}^{q(n)}\sum_{j=1}^{k}x_j  \right| \  .
\end{equation*}	
The proof follows the same lines as in (see \cite{buntinas}, \cite{dagadur}, \cite{goes} and \cite{goes2}), so we omit the
details.

A sequence $x$ in a locally convex sequence space $X$ is said the property  $\sigma_{p}^{q} [K]  $   if 
\begin{equation*}
\frac{1}{q(n)-p(n)}\sum_{k=p(n)+1}^{q(n)}x^{(k)}\to x \text{  in  } X \ .
\end{equation*}  

A sequence $x$ in a locally convex sequence space $X$ is said the property  $\sigma_{p}^{q} [B]  $   if $\forall x\in X$
\begin{equation*}
\left\{ \frac{1}{q(n)-p(n)}\sum_{k=p(n)+1}^{q(n)}x^{(k)} \right\}   \text{  is bounded in  } X \ .
\end{equation*}

Now we determine a new $d$-, $d[b]$-type duality of a sequence space $X$ containing $\phi$.
\begin{eqnarray}
X^{d} &=& \left\{ x\in w  :  \lim_{n\to\infty}\frac{1}{q(n)-p(n)}\sum_{k=p(n)+1}^{q(n)}\sum_{j=1}^{k}x_jy_j \text{ exists for all } y\in X  \right\}  \nonumber \\
&=&\left\{ x\in w : x.y\in \sigma_{p}^{q} [s] \text{ for all } y\in X  \right\} ,\nonumber
\end{eqnarray} 
\begin{eqnarray}
X^{d[b]} &=& \left\{ x\in w  :  \sup_{n}\frac{1}{q(n)-p(n)} \left| \sum_{k=p(n)+1}^{q(n)}\sum_{j=1}^{k}x_jy_j\right| <\infty \ ,~~  y\in X  \right\} \nonumber \\
&=&\left\{ x\in w : x.y\in \sigma_{p}^{q} [b] \text{ for all }   y\in X  \right\} , \nonumber
\end{eqnarray} 
respectively, where $x.y=(x_ny_n)$.

Let $X$, $Y$ be sets of sequences. Then for $\nu= f, \beta, b, d[b]$ 

i) $X\subset X^{\nu\nu}$,

ii) $X^{\nu\nu\nu}=X^{\nu}$,

iii) If $X\subset Y$ then $Y^{\nu}\subset X^{\nu}$ holds.
\begin{theorem}
	Let $X$ be an FK-space containing $\phi$ and $\lim_{n\to\infty}\frac{q(n)-i+1}{q(n)-p(n)}=1$   $(i\leq q(n))$. Then
	
	i) $X^{\beta}\subset X^{d}\subset X^{d[b]}\subset X^f$;
	
	ii) if $X$ is $\sigma_{p}^{q} [K]$-space then $X^f=X^{d}$;
	
	iii) if $X$ is AD-space then $X^{d[b]}=X^{d}$.
\end{theorem}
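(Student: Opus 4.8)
The plan is to work through the three inclusion statements in (i) from left to right, then derive (ii) and (iii) as refinements, in each case exploiting the FK-structure of $X$ and the regularity/limit hypothesis on $\{p(n)\},\{q(n)\}$.

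For (i), start with $X^{\beta}\subset X^{d}$. If $x\in X^{\beta}$ then for every $y\in X$ the series $\sum_j x_jy_j$ converges, say to $L$; since $D_{p,q}$ is regular, the deferred Ces\`{a}ro transform of the partial sums $\sigma_k=\sum_{j=1}^k x_jy_j$ converges to the same limit $L$, which is exactly the defining condition for $x\in X^{d}$. Next, $X^{d}\subset X^{d[b]}$: a convergent sequence of scalars (here the deferred means) is bounded, so the $\sup_n$ in the definition of $X^{d[b]}$ is finite. The substantive inclusion is $X^{d[b]}\subset X^f$. Here I would argue that if $x\in X^{d[b]}$, then for each $k$ the coordinate map $y\mapsto (x.y)_k$ composed with the bounded deferred-mean-of-partial-sums functional gives, via the uniform boundedness principle in the FK-space $X$ (the maps $y\mapsto \frac{1}{q(n)-p(n)}\sum_{k=p(n)+1}^{q(n)}\sum_{j=1}^k x_jy_j$ are continuous linear functionals on $X$, bounded pointwise, hence equicontinuous), a continuous linear functional on $X$. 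Applying the limit hypothesis $\lim_n \frac{q(n)-i+1}{q(n)-p(n)}=1$ to $y=\delta^i$ identifies the value of this functional at $\delta^i$ with $x_i$, so the sequence $(x_i)$ is of the form $(f(\delta^i))$ for some $f\in X'$, i.e. $x\in X^f$. The main obstacle is this last step: one must check carefully that pointwise boundedness of the deferred-mean functionals on the Fr\'echet space $X$ yields a genuine limit functional (a weak-* cluster point argument, or a Banach--Steinhaus / Schur-type closure argument), and that the limit hypothesis is precisely what forces its value on $\delta^i$ to be $x_i$ rather than some rescaled quantity.

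For (ii), assume $X$ is a $\sigma_{p}^{q}[K]$-space, so $\frac{1}{q(n)-p(n)}\sum_{k=p(n)+1}^{q(n)}x^{(k)}\to x$ in $X$ for every $x\in X$. Given $f\in X'$ with associated sequence $u=(f(\delta^k))\in X^f$, apply $f$ to this convergence: by continuity and linearity, $\frac{1}{q(n)-p(n)}\sum_{k=p(n)+1}^{q(n)} f(x^{(k)}) = \frac{1}{q(n)-p(n)}\sum_{k=p(n)+1}^{q(n)}\sum_{j=1}^k x_j u_j \to f(x)$, which says precisely $u\in X^{d}$ (the deferred limit exists, for every $x\in X$). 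Thus $X^f\subset X^{d}$, and combined with $X^{d}\subset X^f$ from (i) this gives equality.

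For (iii), assume $X$ has AD, i.e. $\phi$ is dense in $X$. The inclusion $X^{d}\subset X^{d[b]}$ is already known, so it suffices to prove $X^{d[b]}\subset X^{d}$. Let $x\in X^{d[b]}$; the functionals $g_n(y)=\frac{1}{q(n)-p(n)}\sum_{k=p(n)+1}^{q(n)}\sum_{j=1}^k x_jy_j$ are continuous on $X$ and pointwise bounded, hence equicontinuous by Banach--Steinhaus. On the dense subspace $\phi$ the limit $\lim_n g_n(y)$ exists trivially (each $y\in\phi$ is a finite combination of $\delta^i$'s, and $g_n(\delta^i)=\frac{q(n)-i+1}{q(n)-p(n)}x_i\to x_i$ by the hypothesis). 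An equicontinuous family converging on a dense subspace converges on all of $X$, so $\lim_n g_n(y)$ exists for every $y\in X$, which is exactly $x\in X^{d}$. The delicate point here is invoking the standard fact that a pointwise-bounded, pointwise-convergent-on-a-dense-set equicontinuous family converges everywhere; this needs completeness of $X$ and the AD hypothesis working together, and one should note that the limit hypothesis on $\{p(n)\},\{q(n)\}$ is again what makes $g_n$ converge on $\phi$ in the first place.
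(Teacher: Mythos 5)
Your proposal is correct in substance, and for parts (ii) and (iii) it coincides with the paper's proof (Banach--Steinhaus plus the $\sigma_{p}^{q}[K]$ property for (ii); equicontinuity, convergence on $\phi$, and the convergence lemma on the AD-space $X$ for (iii)). Where you genuinely diverge is the one substantive inclusion $X^{d[b]}\subset X^{f}$ in (i). The paper proves (ii) and (iii) first and then gets (i) almost formally: since $\overline{\phi}$ is an AD FK-space inside $X$, antitonicity gives $X^{d[b]}\subset(\overline{\phi})^{d[b]}$, part (iii) applied to $\overline{\phi}$ gives $(\overline{\phi})^{d[b]}=(\overline{\phi})^{d}$, the first half of the proof of (ii) gives $(\overline{\phi})^{d}\subset(\overline{\phi})^{f}$, and Wilansky's Theorem 7.2.4 gives $(\overline{\phi})^{f}=X^{f}$; no new functional-analytic work is needed beyond what (ii) and (iii) already contain. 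You instead argue directly on $X$, and the step you flag does close: the functionals $g_{n}(y)=\frac{1}{q(n)-p(n)}\sum_{k=p(n)+1}^{q(n)}\sum_{j=1}^{k}x_{j}y_{j}$ are pointwise bounded, hence equicontinuous, and an equicontinuous set in $X'$ is relatively weak-* compact (Alaoglu--Bourbaki), so $\{g_{n}\}$ has a cluster point $f\in X'$; since $g_{n}(\delta^{i})=\frac{q(n)-i+1}{q(n)-p(n)}x_{i}\to x_{i}$ by the limit hypothesis, every cluster point satisfies $f(\delta^{i})=x_{i}$, i.e.\ $x\in X^{f}$. Alternatively (and closer in spirit to the paper) one can note via the convergence lemma that $\lim_{n}g_{n}$ exists on $\overline{\phi}$, defines an element of $(\overline{\phi})'$, and extends to $X$ by Hahn--Banach. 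Your direct route is self-contained and makes transparent exactly where the hypothesis $\lim_{n}\frac{q(n)-i+1}{q(n)-p(n)}=1$ enters; the paper's route buys economy by reusing (ii), (iii) and the invariance $X^{f}=(\overline{\phi})^{f}$ in place of a compactness or extension argument. Your treatment of the easy inclusions $X^{\beta}\subset X^{d}\subset X^{d[b]}$ (regularity of $D_{p,q}$ applied to the partial sums, and convergent-implies-bounded) is exactly what the paper leaves implicit. Just be sure, in the final write-up, to state the cluster-point (or Hahn--Banach) step explicitly rather than leaving it as an announced obstacle, since that is the only place where your argument is not yet fully written out.
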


\begin{proof}
	ii) 	Let $u\in X^{d}$ and $f(x)=\lim_{n\to\infty}\frac{1}{q(n)-p(n)}\sum_{k=p(n)+1}^{q(n)}\sum_{j=1}^{k}x_ju_j$ for $x\in X$. Then $f\in X'$ by Banach-Steinhaus Theorem [\cite{wilansky}; Theorem 1.0.4]. Now we get
	\begin{equation*}
	f(\delta^i)=\lim_{n\to\infty}\frac{1}{q(n)-p(n)}\sum_{k=p(n)+1}^{q(n)}\sum_{j=1}^{k}u_j\delta^i=\lim_{n\to\infty}\frac{q_n-i+1}{q(n)-p(n)}u_i=u_i
	\end{equation*} 
	so $u\in X^f$. Thus $X^{d}\subset X^f $. 
	
	Let $u\in X^f$. Since $X$ is $\sigma_{p}^{q} [K]$-space 
	\begin{equation*}
	f(x)=\lim_{n\to\infty}\frac{1}{q(n)-p(n)}\sum_{k=p(n)+1}^{q(n)}\sum_{j=1}^{k}x_jf(\delta^j)=\lim_{n\to\infty}\frac{1}{q(n)-p(n)}\sum_{k=p(n)+1}^{q(n)}\sum_{j=1}^{k}x_ju_j
	\end{equation*} 	
	for $x\in X$, then $u\in X^{d} $. Hence $X^f=X^{d}$. 
	
	iii) 	Let $u\in X^{d[b]}$. We define 
	\begin{equation*}
	f_n(x)=\frac{1}{q(n)-p(n)}\sum_{k=p(n)+1}^{q(n)}\sum_{j=1}^{k}u_jx_j
	\end{equation*}
	for $x\in X$. Then $\{f_n\}$ is pointwise bounded and hence equicontinuous by Theorem 7.0.2 of \cite{wilansky}. Since 
	\begin{equation*}
	\lim_{n\to\infty} f_n(\delta^{i})=u_i \ , \ ~~ i<q(n),
	\end{equation*}
	we conclude that $\phi\subset \{x: \lim_nf_n(x) \text{ exists } \}$ is a closed subspace of $X$ by the Convergence Lemma (see \cite{wilansky}; 1.0.5 and 7.0.3). Since $X$ is an AD-space, $X=\{x: \lim_nf_n(x)  \text{ exists } \}=\overline{\phi}$ and thus $\lim_nf_n(x)$ exists for all $x\in X$. Therefore, $u\in X^{d}$. The opposite inclusion is trivial.
	
	i)  $\overline{\phi}\subset X$ by the hypothesis. Since $\overline{\phi}$ is an AD-space, we find 
	\begin{equation*}
	X^{d[b]}\subset (\overline{\phi})^{d[b]}=(\overline{\phi})^{d}\subset (\overline{\phi})^f=X^f
	\end{equation*}
	by (ii), (iii) and Theorem 7.2.4 of \cite{wilansky}.
\end{proof}

\section{Main Results}

We shall define some new subspaces of a locally convex FK-space $X$ containing $\phi$, the space of finite sequences, which are the impotance of each one on topological sequence spaces theory.

\begin{definition}
	\label{subseq}
	Let $X$ be an FK-space $\supset \phi$. Then
	\begin{eqnarray}
	D_{p}^{q}W&:=&D_{p}^{q}W(X)= \left\{ x\in X  :  \frac{1}{q(n)-p(n)}\sum_{k=p(n)+1}^{q(n)}x^{(k)} \to x~ \text{(weakly)} \text{  in  } X     \right\}  \nonumber \\
	&= &\left\{ x\in X  :  f(x)=\lim_n \frac{1}{q(n)-p(n)}\sum_{k=p(n)+1}^{q(n)}\sum_{j=1}^{k}x_jf(\delta^j) ~ \text{for all  } f\in  X     \right\} ~~ , \nonumber
	\end{eqnarray}
	
	\begin{eqnarray}
	D_{p}^{q}S:=D_{p}^{q}S(X)&=& \left\{ x\in X  :  \frac{1}{q(n)-p(n)}\sum_{k=p(n)+1}^{q(n)}x^{(k)} \to x~    \right\}  \nonumber \\
	&=& \left\{ x\in X  :  x \text{  has  }  \sigma_{p}^{q} [K]  \text{  in  } X   \right\}  \nonumber \\
	&=&  \left\{ x\in X  : x=\lim_n \frac{1}{q(n)-p(n)}\sum_{k=p(n)+1}^{q(n)}\sum_{j=1}^{k} x_j\delta^j~    \right\}.  \nonumber
	\end{eqnarray}
	
	Thus $X$ is an $\sigma_{p}^{q} [K]$-space if and only if $D_{p}^{q}S=X$.	
	
	\begin{eqnarray}
	&&	D_{p}^{q}F^+:=D_{p}^{q}F^+(X) \nonumber \\
	&=&\left\{ x\in w  :  \lim_n \left\{    \frac{1}{q(n)-p(n)}\sum_{k=p(n)+1}^{q(n)}x^{(k)}\right\}  \text{ is weakly Cauchy in } X  \right\}  \nonumber \\
	&=& \left\{ x\in w  :       \lim_n   \frac{1}{q(n)-p(n)}\sum_{k=p(n)+1}^{q(n)} \sum_{j=1}^{k} x_j f(\delta^j)   \text{ exists for all } f\in X'    \right\}  \nonumber \\
	&=&  \left\{ x\in w  :  \left\{ x_nf(\delta^n)   \right\} \in \sigma_{p}^{q} [s]  \text{  for all } f\in X'   \right\} = (X^{f})^{d}  \ .     \nonumber
	\end{eqnarray}
	
		\begin{eqnarray}
	D_{p}^{q}B^+:=D_{p}^{q}B^+(X)&=& \left\{ x\in w  : \left\{ \frac{1}{q(n)-p(n)}\sum_{k=p(n)+1}^{q(n)}x^{(k)} \right\} \text{  is bounded in } X    \right\}  \nonumber \\
	&=& \left\{ x\in w  : (x_nf(\delta^n))\in \sigma_{p}^{q} [b]   \text{  for all  } f\in X'   \right\}  \nonumber 
	\end{eqnarray}
	also	$D_{p}^{q}F=D_{p}^{q}F^+\cap X$  and  $	D_{p}^{q}B=	D_{p}^{q}B^+\cap X$.
\end{definition}
 
 We now study some inclusions which are analogous to those given in  ( \cite{wilansky}; chapter 10 ). Also, we prove some theorems related to the $f$-, $d$- and $d[b]$-duality of a sequence space $X$.

\begin{theorem}
	\label{inclusion}
	Let $X$ be an FK-space $\supset \phi$. Then 
	\begin{equation*}
	\phi\subset D_{p}^{q}S \subset D_{p}^{q}W \subset D_{p}^{q}F \subset D_{p}^{q}B\subset X ~~ \text{ and } ~~ \phi\subset D_{p}^{q}S \subset D_{p}^{q}W \subset \overline{\phi}.
	\end{equation*}
\end{theorem}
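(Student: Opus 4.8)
The chain of inclusions splits into two types: the ``easy'' set-theoretic inclusions $\phi\subset D_p^qS$ and $D_p^qS\subset D_p^qW\subset D_p^qF\subset D_p^qB\subset X$, which follow almost directly from the defining conditions, and the less obvious $D_p^qW\subset\overline{\phi}$. I would dispose of the easy ones first. For $\phi\subset D_p^qS$: if $x\in\phi$, then $x^{(k)}=x$ for all $k$ large enough (once $k$ exceeds the largest index on which $x$ is nonzero), hence the deferred Ces\`aro average $\frac{1}{q(n)-p(n)}\sum_{k=p(n)+1}^{q(n)}x^{(k)}$ equals $x$ for all large $n$ (here one uses $q(n)\to\infty$), so it converges to $x$ in $X$ and $x\in D_p^qS$. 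The inclusion $D_p^qS\subset D_p^qW$ is immediate because norm/topology convergence implies weak convergence in any locally convex space; the value of the weak limit is the same element $x$, which matches the second description of $D_p^qW$ via functionals $f\in X'$ (applying $f$ to $\frac{1}{q(n)-p(n)}\sum x^{(k)}\to x$ gives $f(x)=\lim_n\frac{1}{q(n)-p(n)}\sum_{k=p(n)+1}^{q(n)}\sum_{j=1}^k x_j f(\delta^j)$ by continuity of $f$).

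Next, $D_p^qW\subset D_p^qF$: an element of $D_p^qW$ lies in $X$ by definition, and if $f(x)=\lim_n\frac{1}{q(n)-p(n)}\sum_{k=p(n)+1}^{q(n)}\sum_{j=1}^k x_j f(\delta^j)$ for every $f\in X'$, then in particular the limit on the right exists for every $f\in X'$, which is exactly the condition defining $D_p^qF^+$; intersecting with $X$ gives $x\in D_p^qF$. For $D_p^qF\subset D_p^qB$: a convergent sequence of scalars is bounded, so ``$\left\{\frac{1}{q(n)-p(n)}\sum_{k=p(n)+1}^{q(n)}\sum_{j=1}^k x_j f(\delta^j)\right\}$ converges'' implies it is bounded for each $f\in X'$; but boundedness of $\left\{\frac{1}{q(n)-p(n)}\sum_{k=p(n)+1}^{q(n)}x^{(k)}\right\}$ in the FK-space $X$ is characterized (via the continuous functionals, using that $X$ is locally convex and barrelled, so weak boundedness equals boundedness — cf. \cite{wilansky}) by the scalar sequences $\{f(\cdot)\}$ being bounded for all $f\in X'$, which is how $D_p^qB^+$ is described; intersecting with $X$ gives the inclusion. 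Finally $D_p^qB\subset X$ and $D_p^qS\subset\overline{\phi}$ are built into the definitions once we have the main point.

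The real work is $D_p^qW\subset\overline{\phi}$, equivalently (together with what precedes) $D_p^qS\subset\overline{\phi}$. Let $x\in D_p^qW$. The key observation is that each average $y_n:=\frac{1}{q(n)-p(n)}\sum_{k=p(n)+1}^{q(n)}x^{(k)}$ is a (finite) convex combination — or at least a finite linear combination — of the vectors $\delta^j$, hence $y_n\in\phi$ for every $n$. Since $y_n\to x$ weakly in $X$, the point $x$ lies in the weak closure of the convex set $\phi$ (or of the balanced convex hull of $\{y_n\}\subset\phi$). By Mazur's theorem (\cite{wilansky}), in a locally convex space the weak closure and the original (here metrizable FK-topology) closure of a convex set coincide, so $x\in\overline{\phi}$. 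The only subtlety is that $\phi$ is already a linear subspace, hence convex, so $\overline{\phi}^{\,w}=\overline{\phi}$ directly; thus $x\in D_p^qW$ yields $x\in\overline{\phi}$, and a fortiori $D_p^qS\subset D_p^qW\subset\overline{\phi}$.

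I expect the main obstacle to be purely bookkeeping: making sure that for $x\in\phi$ the stabilization ``$x^{(k)}=x$ for $k$ large'' is used correctly against the range $p(n)<k\le q(n)$ with $q(n)\to\infty$ (note $p(n)$ may also tend to infinity, which only helps), and that the identification of the two descriptions of each space $D_p^qW$, $D_p^qF^+$, $D_p^qB^+$ via functionals is invoked rather than re-derived. The one genuinely non-formal ingredient is the appeal to Mazur's theorem for $D_p^qW\subset\overline{\phi}$; everything else is a direct unravelling of the definitions together with the elementary facts that topological convergence implies weak convergence and that convergent scalar sequences are bounded.
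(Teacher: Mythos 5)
Your treatment of the one nontrivial inclusion $D_p^qW\subset\overline{\phi}$ is correct and is essentially the dual formulation of the paper's one-line argument: the paper takes $f\in X'$ with $f=0$ on $\phi$, notes that the functional description of $D_p^qW$ forces $f(x)=\lim_n\frac{1}{q(n)-p(n)}\sum_{k=p(n)+1}^{q(n)}\sum_{j=1}^{k}x_jf(\delta^j)=0$ on $D_p^qW$, and then invokes Hahn--Banach; you instead observe that the averages $\frac{1}{q(n)-p(n)}\sum_{k=p(n)+1}^{q(n)}x^{(k)}$ lie in $\phi$ and converge weakly to $x$, so $x$ is in the weak closure of the subspace $\phi$, which equals $\overline{\phi}$ by Mazur's theorem --- the same Hahn--Banach separation fact in geometric dress, so nothing is lost or gained either way. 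One genuine (though easily repaired) slip occurs in the easy step $\phi\subset D_p^qS$: you claim the deferred average \emph{equals} $x$ for all large $n$, but that requires $p(n)\to\infty$, which is not assumed (only $p(n)<q(n)$ and $q(n)\to\infty$); for instance with $p(n)=0$, $q(n)=n$ and $x=\delta^2$ the average is $\frac{n-1}{n}\delta^2\neq x$ for every $n$. The conclusion survives because the difference from $x$ involves only the finitely many terms with $k$ below the last nonzero index $N$ of $x$, each a fixed vector of $\phi$, divided by $q(n)-p(n)$; and on those $n$ for which $p(n)<N$ one has $q(n)-p(n)\geq q(n)-N\to\infty$, so every seminorm of the difference tends to $0$ and the average still converges to $x$ in $X$. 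With that correction your proof is complete and covers everything the paper's proof does.
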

\begin{proof}
The only non-trivial part is $ D_{p}^{q}W \subset \overline{\phi}$. Let $f\in X'$ and $f=0$ on $\phi$. The definition of $D_{p}^{q}W$ shows that $f=0$ on $D_{p}^{q}W$. Hence, the Hahn-Banach theorem gives the result.	 
\end{proof}	
\begin{theorem}
	\label{monotone}
	The subspaces $E=D_{p}^{q}S,\ D_{p}^{q}W,\ D_{p}^{q}F,\ D_{p}^{q}F^+,\ D_{p}^{q}B$ and $D_{p}^{q}B^+$ of $X$ FK-spaces are monotone i.e., if $X\subset Y$ then $E(X)\subset E(Y)$.
\end{theorem}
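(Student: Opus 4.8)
The plan is to verify monotonicity separately for the subspaces defined purely in terms of the dual action of functionals — namely $D_{p}^{q}F^+$ and $D_{p}^{q}B^+$ — and then deduce the remaining cases from these together with the defining intersections $D_{p}^{q}F = D_{p}^{q}F^+\cap X$ and $D_{p}^{q}B = D_{p}^{q}B^+\cap X$. For $D_{p}^{q}F^+$ the key observation is the characterization $D_{p}^{q}F^+(X) = (X^f)^d$ recorded in Definition~\ref{subseq}; since $X\subset Y$ forces $Y^f\subset X^f$ by the functorial property of the $f$-dual (item (iii) of the duality list preceding Theorem~1.1, together with the fact that the inclusion $X\hookrightarrow Y$ of FK-spaces is continuous, so restriction carries $Y'$ into $X'$), and since $\nu=d$ is order-reversing in the same way (again item (iii), for $\nu=d$), we get $(Y^f)^d \supset$ wait — applying order-reversal twice: $Y^f\subset X^f$ gives $(X^f)^d\subset (Y^f)^d$, i.e.\ $D_{p}^{q}F^+(X)\subset D_{p}^{q}F^+(Y)$. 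The same two-step argument handles $D_{p}^{q}B^+$ via its characterization $\{x : (x_nf(\delta^n))\in\sigma_{p}^{q}[b] \text{ for all } f\in X'\} = (X^f)^{d[b]}$ and the order-reversal of $\nu=d[b]$.

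Next I would dispose of $D_{p}^{q}F$ and $D_{p}^{q}B$. From $X\subset Y$ and the two cases just proved, $D_{p}^{q}F(X) = D_{p}^{q}F^+(X)\cap X \subset D_{p}^{q}F^+(Y)\cap Y = D_{p}^{q}F(Y)$, using that $X\subset Y$ on the second factor; identically for $B$.

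For $E=D_{p}^{q}S$ and $E=D_{p}^{q}W$ I would argue directly from the definitions. If $x\in D_{p}^{q}S(X)$ then the Ces\`{a}ro-deferred averages $\frac{1}{q(n)-p(n)}\sum_{k=p(n)+1}^{q(n)}x^{(k)}$ converge to $x$ in the topology of $X$; since the inclusion $(X,\tau_X)\hookrightarrow (Y,\tau_Y)$ is continuous (both being FK-spaces with $X\subset Y$, by the standard FK-embedding theorem, e.g.\ \cite{wilansky}), the same averages converge to $x$ in $Y$, so $x\in D_{p}^{q}S(Y)$. For $D_{p}^{q}W$ one uses instead the functional characterization: if $f\in Y'$ then $f|_X\in X'$ by continuity of the inclusion, so $x\in D_{p}^{q}W(X)$ gives $f(x)=\lim_n \frac{1}{q(n)-p(n)}\sum_{k=p(n)+1}^{q(n)}\sum_{j=1}^{k}x_j f(\delta^j)$ for every $f\in Y'$, whence $x\in D_{p}^{q}W(Y)$; alternatively, weak convergence of the averages is preserved because continuous linear maps are weak-to-weak continuous.

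The only point requiring any care — and the one I would flag as the main obstacle, modest though it is — is the input fact that an inclusion $X\subset Y$ of FK-spaces is automatically continuous, since everything above (continuity of the averages' convergence, the restriction $Y'\to X'$, and the inequalities $Y^f\subset X^f$ for the four dualities $\nu=f,\beta,b,d[b]$) rests on it. This is the classical result that a sequence space carries at most one FK-topology and that FK-inclusions are continuous; I would cite it from \cite{wilansky} (or \cite{boos}) rather than reprove it. With that in hand the proof is a clean bookkeeping exercise, and I would present it by treating $D_{p}^{q}F^+$ and $D_{p}^{q}B^+$ first, then $D_{p}^{q}F$ and $D_{p}^{q}B$, then $D_{p}^{q}S$ and $D_{p}^{q}W$.
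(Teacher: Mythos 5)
Your proof is correct and follows essentially the same route as the paper: continuity of the FK inclusion $X\hookrightarrow Y$ (Wilansky, Corollary 4.2.4) handles $D_{p}^{q}S$ and, via weak continuity and restriction of functionals, $D_{p}^{q}W$, while your duality formulation $D_{p}^{q}F^+(X)=(X^f)^d$, $D_{p}^{q}B^+(X)=(X^f)^{d[b]}$ with $Y^f\subset X^f$ is just the paper's argument that $(z_nf(\delta^n))\in\sigma_{p}^{q}[s]$ (resp.\ $\sigma_{p}^{q}[b]$) for all $f\in X'$ implies the same for all $g\in Y'$ since $g|X\in X'$. The deduction of the $D_{p}^{q}F$ and $D_{p}^{q}B$ cases by intersecting with $X\subset Y$ also matches the paper.
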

\begin{proof}
The inclusion map $i : X \to Y$ is continuous by Corollary 4.2.4 of \cite{wilansky}, so $\frac{1}{q(n)-p(n)}\sum_{k=p(n)+1}^{q(n)}x^{(k)}\to x$	in $X$ implies the same in $Y$. This proves the assertion for $D_{p}^{q}S$. For $D_{p}^{q}W$ it follows from the fact that $i$ is weakly continuous by (4.0.11) of \cite{wilansky}. Now $z \in D_{p}^{q}F^+$, $ D_{p}^{q}B^+ $ if and only if $(z_nf(\delta^n))\in \sigma_{p}^{q} [s]$, $ \sigma_{p}^{q} [b]$ respectively for all $f\in X'$, hence for all $g\in Y'$ since $g|X\in X'$ by Corollary 4.2.4 of \cite{wilansky}. The result follows for $D_{p}^{q}F^+$, $D_{p}^{q}B^+$ and so for $D_{p}^{q}F$, $D_{p}^{q}B$.
\end{proof}

Since $[\sigma_0]_{p}^{q}$ is an AK-space, we immediately get the following
\begin{theorem}
	Let $X$ be an FK-space $\supset [\sigma_0]_{p}^{q} $. Then $[\sigma_0]_{p}^{q} \subset D_{p}^{q}S\subset D_{p}^{q}W $.
\end{theorem}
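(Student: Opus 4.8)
The plan is to reduce everything to the AK/AD properties of $[\sigma_0]_p^q$ together with the monotonicity already established in Theorem~\ref{monotone}. First I would recall that $[\sigma_0]_p^q$ being an AK-space means precisely that every $x\in[\sigma_0]_p^q$ satisfies $x^{(n)}\to x$ in $[\sigma_0]_p^q$; in particular $x$ has the property $\sigma_p^q[K]$ in $[\sigma_0]_p^q$, since the Ces\`aro-type average $\frac{1}{q(n)-p(n)}\sum_{k=p(n)+1}^{q(n)}x^{(k)}$ of a convergent sequence of partial sums converges to the same limit (the deferred Ces\`aro mean $D_{p,q}$ is regular). Hence $D_p^q S\big([\sigma_0]_p^q\big)=[\sigma_0]_p^q$.

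Next I would invoke monotonicity: since by hypothesis $[\sigma_0]_p^q\subset X$ and both are FK-spaces, Theorem~\ref{monotone} applied to the functor $E=D_p^q S$ gives
\begin{equation*}
[\sigma_0]_p^q = D_p^q S\big([\sigma_0]_p^q\big) \subset D_p^q S(X).
\end{equation*}
Finally, the chain $D_p^q S(X)\subset D_p^q W(X)$ is exactly the inclusion furnished by Theorem~\ref{inclusion}, so concatenating the two inclusions yields $[\sigma_0]_p^q\subset D_p^q S\subset D_p^q W$, which is the claim.

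The only genuinely non-routine point is verifying that $[\sigma_0]_p^q$ is an AK-space, i.e.\ justifying the parenthetical "Since $[\sigma_0]_p^q$ is an AK-space" — but this is asserted in the text and follows the same pattern as the BK-space assertions made earlier (proved "along the same lines as in \cite{buntinas}, \cite{dagadur}, \cite{goes}, \cite{goes2}"), so I would simply cite it. Everything after that is a two-line assembly of Theorems~\ref{inclusion} and~\ref{monotone}, so there is really no obstacle; the proof as written in the excerpt ("Since $[\sigma_0]_p^q$ is an AK-space, we immediately get the following") is essentially complete once one notes that $D_p^q S([\sigma_0]_p^q)=[\sigma_0]_p^q$ and then applies monotonicity and the standard inclusion chain.
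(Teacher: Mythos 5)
Your proposal is correct and follows exactly the route the paper intends: the paper's entire proof is the remark that $[\sigma_0]_{p}^{q}$ is an AK-space, and you fill in the same argument — AK plus regularity of $D_{p,q}$ (applied seminorm-wise) gives $\sigma_{p}^{q}[K]$ in $[\sigma_0]_{p}^{q}$, hence $D_{p}^{q}S([\sigma_0]_{p}^{q})=[\sigma_0]_{p}^{q}$, and then Theorem~\ref{monotone} and Theorem~\ref{inclusion} yield $[\sigma_0]_{p}^{q}\subset D_{p}^{q}S(X)\subset D_{p}^{q}W(X)$. No gaps beyond the paper's own unproved assertion that $[\sigma_0]_{p}^{q}$ is AK, which you correctly cite rather than reprove.
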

\begin{theorem}
	\label{DB}
Let $X$ be an FK-space $\supset \phi $. Then $D_{p}^{q}B^+=X^{fd[b]}$.
\end{theorem}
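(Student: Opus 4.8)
The plan is to prove the two inclusions $D_{p}^{q}B^+ \subset X^{fd[b]}$ and $X^{fd[b]} \subset D_{p}^{q}B^+$ separately, unwinding the definitions. Recall from Definition \ref{subseq} that $x \in D_{p}^{q}B^+$ precisely when $(x_n f(\delta^n)) \in \sigma_{p}^{q}[b]$ for every $f \in X'$, while $X^{fd[b]}$ unwinds to the set of $x \in w$ such that $x \cdot y \in \sigma_{p}^{q}[b]$ for every $y \in X^f$. So what must be shown is essentially the equivalence: $(x_n f(\delta^n))_n \in \sigma_{p}^{q}[b]$ for all $f \in X'$ if and only if $(x_n y_n)_n \in \sigma_{p}^{q}[b]$ for all $y \in X^f$. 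First I would record that $X^f = \{(f(\delta^n))_n : f \in X'\}$ by definition of the $f$-dual; this is the bridge between the two formulations.

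For the direction $X^{fd[b]} \subset D_{p}^{q}B^+$, take $x \in X^{fd[b]}$ and let $f \in X'$ be arbitrary. Then $y := (f(\delta^n))_n \in X^f$, so by hypothesis $x \cdot y = (x_n f(\delta^n))_n \in \sigma_{p}^{q}[b]$, which is exactly the condition for $x \in D_{p}^{q}B^+$. This direction is immediate. For the reverse direction $D_{p}^{q}B^+ \subset X^{fd[b]}$, take $x \in D_{p}^{q}B^+$ and let $y \in X^f$. Then $y = (f(\delta^n))_n$ for some $f \in X'$, and by hypothesis $(x_n f(\delta^n))_n = x \cdot y \in \sigma_{p}^{q}[b]$; since this holds for every $y \in X^f$, we conclude $x \in X^{fd[b]}$. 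Both inclusions therefore reduce to the single observation that the two quantifiers — over functionals $f \in X'$ and over elements $y$ of the $f$-dual — range over the same family of sequences $(f(\delta^n))_n$.

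The one point that deserves care, and which I expect to be the main (minor) obstacle, is confirming that the equivalence of $D_{p}^{q}B^+$ with $X^{fd[b]}$ as \emph{point sets} in $w$ is all that is being asserted here — no topological or norm identification between the two FK-space structures is claimed by the statement — so the proof is genuinely just the double-inclusion argument above via the alternative description $X^{fd[b]} = \{x \in w : x \cdot y \in \sigma_{p}^{q}[b]\ \text{for all}\ y \in X^f\}$ already recorded in the excerpt. I would also note in passing that the hypothesis $\phi \subset X$ guarantees $X^f$ is well defined and that the coordinate functionals are available, so that $(f(\delta^n))_n$ makes sense and the correspondence $f \mapsto (f(\delta^n))_n$ is exactly the defining map of $X^f$; no further regularity hypothesis on $\{p(n)\}, \{q(n)\}$ (such as the limit condition of Theorem 1.1) is needed for this particular identity.
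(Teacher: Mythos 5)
Your proposal is correct and is essentially the paper's own proof: the paper likewise observes that, by Definition \ref{subseq}, $z\in D_{p}^{q}B^{+}$ if and only if $z.u\in\sigma_{p}^{q}[b]$ for every $u\in X^{f}$, which is exactly the defining condition for $X^{fd[b]}$. Your spelled-out double inclusion via the correspondence $f\mapsto(f(\delta^{n}))_{n}$ is just a more explicit version of that one-line identification.
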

\begin{proof}
By Definition \ref{subseq}, $z\in D_{p}^{q}B^+ $ if and only if $z.u\in \sigma_{p}^{q} [b] $ for each $u \in X^f$. This is precisely the assertion.
\end{proof}

\begin{theorem}
	\label{DBphi}
	Let $X$ be an FK-space $\supset \phi $. Then $D_{p}^{q}B^+ $ is the same for all FK-spaces $Y$ between $\overline{\phi}$ and $X$; i.e., $\overline{\phi}\subset Y\subset X$ implies $D_{p}^{q}B^+(Y)=D_{p}^{q}B^+(X) $. Here the closure of $\phi$ is calculated in $X$.
\end{theorem}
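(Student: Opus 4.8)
The plan is to reduce the statement to the identity $D_{p}^{q}B^+(X)=X^{fd[b]}$ of Theorem \ref{DB} together with the classical fact that the $f$-dual of an FK-space only depends on the closure of $\phi$. So I would first record that every space in sight is an FK-space: $\overline{\phi}$ (the closure taken in $X$) is a closed subspace of the FK-space $X$, hence an FK-space, and by hypothesis $\overline{\phi}\subset Y\subset X$ with all inclusion maps continuous (Corollary 4.2.4 of \cite{wilansky}). Then Theorem \ref{monotone}, which asserts that $D_{p}^{q}B^+$ is monotone, applied twice gives the chain
\begin{equation*}
D_{p}^{q}B^+(\overline{\phi})\subset D_{p}^{q}B^+(Y)\subset D_{p}^{q}B^+(X).
\end{equation*}
Thus it is enough to prove $D_{p}^{q}B^+(\overline{\phi})=D_{p}^{q}B^+(X)$.

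Next I would apply Theorem \ref{DB} to the two FK-spaces $X$ and $\overline{\phi}$, which turns the required equality into $(\overline{\phi})^{fd[b]}=X^{fd[b]}$. For this it clearly suffices to show $X^{f}=(\overline{\phi})^{f}$ and then take $d[b]$-duals of both sides. The equality $X^{f}=(\overline{\phi})^{f}$ is precisely Theorem 7.2.4 of \cite{wilansky} (already invoked in the proof of Theorem~2.1(i)): since $\overline{\phi}$ is a closed subspace of $X$, every $g\in(\overline{\phi})'$ extends by the Hahn--Banach theorem to some $f\in X'$ with $f(\delta^{k})=g(\delta^{k})$ for all $k$, while conversely every $f\in X'$ restricts to an element of $(\overline{\phi})'$; hence the two sets $\{(f(\delta^{k}))_k:f\in X'\}$ and $\{(g(\delta^{k}))_k:g\in(\overline{\phi})'\}$ coincide. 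Combining $X^{f}=(\overline{\phi})^{f}$ with the chain above yields $D_{p}^{q}B^+(Y)=D_{p}^{q}B^+(X)$, as desired.

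I do not expect a real obstacle here; the argument is essentially a two-line deduction from Theorems \ref{monotone} and \ref{DB} plus Wilansky's $f$-dual invariance. The only point needing a moment's care is bookkeeping about which closure of $\phi$ is being used: one must observe that the relevant FK-subspace to feed into Theorem \ref{monotone} is $\overline{\phi}$ computed in $X$ (which by hypothesis lies inside $Y$), not the a priori smaller closure of $\phi$ inside $Y$, since the latter need not contain the former and the monotonicity argument requires the sandwich $\overline{\phi}\subset Y\subset X$ with the fixed space $\overline{\phi}$ at the bottom.
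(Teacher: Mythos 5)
Your proposal is correct and follows essentially the same route as the paper: the monotonicity chain $D_{p}^{q}B^+(\overline{\phi})\subset D_{p}^{q}B^+(Y)\subset D_{p}^{q}B^+(X)$ from Theorem \ref{monotone}, then equality of the two ends via Theorem \ref{DB} and the $f$-dual invariance $X^{f}=(\overline{\phi})^{f}$ of Wilansky's Theorem 7.2.4. Your additional remarks (unpacking 7.2.4 via Hahn--Banach and the bookkeeping about which closure of $\phi$ is used) only make explicit what the paper leaves implicit.
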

\begin{proof}
By Theorem \ref{monotone} we have $D_{p}^{q}B^+(\overline{\phi})\subset D_{p}^{q}B^+(Y) \subset D_{p}^{q}B^+(X) $. By Theorem \ref{DB} and by (7.2.4) of \cite{wilansky} the first and the last are equal.
\end{proof}

\begin{theorem}
	\label{swphi}
	Let $X$ be an FK-space such that $D_{p}^{q}B \supset \overline{\phi}$. Then $\overline{\phi}$ has $\sigma_{p}^{q} [K]$ 	and $D_{p}^{q}S=D_{p}^{q}W= \overline{\phi}$.
\end{theorem}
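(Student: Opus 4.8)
The plan is to leverage the chain $\phi\subset D_{p}^{q}S\subset D_{p}^{q}W\subset\overline{\phi}$ furnished by Theorem~\ref{inclusion}: once the reverse inclusion $\overline{\phi}\subset D_{p}^{q}S$ is established, all three spaces collapse to $\overline{\phi}$ at once. Moreover, for $x\in\overline{\phi}$ each average $\frac{1}{q(n)-p(n)}\sum_{k=p(n)+1}^{q(n)}x^{(k)}$ lies in $\phi\subset\overline{\phi}$, and $\overline{\phi}$ carries the relative topology of $X$; hence convergence of this sequence to $x$ in $X$ is the same as in $\overline{\phi}$, so proving $\overline{\phi}\subset D_{p}^{q}S(X)$ simultaneously yields that every $x\in\overline{\phi}$ has $\sigma_{p}^{q}[K]$ in $\overline{\phi}$, i.e. that $\overline{\phi}$ has $\sigma_{p}^{q}[K]$.

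To obtain $\overline{\phi}\subset D_{p}^{q}S$, I would introduce the linear maps $f_{n}\colon\overline{\phi}\to X$, $f_{n}(x)=\frac{1}{q(n)-p(n)}\sum_{k=p(n)+1}^{q(n)}x^{(k)}$. Each $f_{n}$ is continuous, since composing it with any coordinate functional produces a fixed finite linear combination of the coordinates of $x$, and $X$, hence its closed subspace $\overline{\phi}$, is an FK-space. By hypothesis $\overline{\phi}\subset D_{p}^{q}B=D_{p}^{q}B^{+}\cap X$, which by Definition~\ref{subseq} says precisely that $\{f_{n}(x)\}$ is bounded in $X$ for every $x\in\overline{\phi}$. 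As $\overline{\phi}$ is a Fr\'{e}chet space, the uniform boundedness principle (Theorem 7.0.2 of \cite{wilansky}) then gives that $\{f_{n}\}$ is equicontinuous.

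Next I would apply the Convergence Lemma (\cite{wilansky}; 1.0.5 and 7.0.3): equicontinuity forces $G:=\{\,x\in\overline{\phi}:\lim_{n}f_{n}(x)\ \text{exists in}\ X\,\}$ to be a closed vector subspace of $\overline{\phi}$. A direct coordinatewise computation gives, for each $j$, $(f_{n}(x))_{j}=\frac{q(n)-\max\{p(n),\,j-1\}}{q(n)-p(n)}\,x_{j}$, which tends to $x_{j}$ by the growth condition $\lim_{n}\frac{q(n)-i+1}{q(n)-p(n)}=1$ $(i\le q(n))$ in force throughout. In particular, for $x\in\phi$ the finite sum $f_{n}(x)$ converges to $x$ in $X$, so $\phi\subset G$; density of $\phi$ in $\overline{\phi}$ then yields $G=\overline{\phi}$. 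Since $X$ is a K-space, the limit $\lim_{n}f_{n}(x)$ must agree with the coordinatewise one, namely $x$, so $x\in D_{p}^{q}S$ for every $x\in\overline{\phi}$, which completes the argument.

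The step requiring real care is the invocation of Banach--Steinhaus: one must record that $\overline{\phi}$ with the relative topology is genuinely a Fr\'{e}chet space (being a closed subspace of the FK-space $X$), so that the machinery of \cite{wilansky} applies, and one must use the growth hypothesis on $\{p(n)\},\{q(n)\}$ to pin the coordinatewise limit to $x$ rather than to a scalar multiple of it. The remaining points — continuity of the $f_{n}$, the coordinate computation, and the density argument — are routine.
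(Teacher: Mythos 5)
Your argument is correct and is essentially the paper's own proof: both rest on pointwise boundedness of the averaging maps (coming from $\overline{\phi}\subset D_{p}^{q}B$), equicontinuity via Theorem 7.0.2 of \cite{wilansky}, the Convergence Lemma (7.0.3) plus density of $\phi$ to get $\overline{\phi}\subset D_{p}^{q}S$, and then Theorem \ref{inclusion} to collapse $D_{p}^{q}S=D_{p}^{q}W=\overline{\phi}$; if anything, your explicit restriction to $\overline{\phi}$ is cleaner than the paper's ``suppose first that $X$ has $\sigma_{p}^{q}[B]$''. One small correction: the condition $\lim_{n}\frac{q(n)-i+1}{q(n)-p(n)}=1$ is not a standing hypothesis of the paper (it appears only in Theorem 1.1) and is not needed here, since for fixed $j$ the $j$-th coordinate coefficient equals $1$ when $j-1\le p(n)$ and otherwise lies between $1-\frac{j-1}{q(n)}$ and $1$, so it tends to $1$ from $q(n)\to\infty$ alone.
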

\begin{proof}
Suppose first that $X$ has $\sigma_{p}^{q} [B]$. Define $f_n:X\to X$ by
\begin{equation*}
f_n(x)=x-\frac{1}{q(n)-p(n)}\sum_{k=p(n)+1}^{q(n)}x^{(k)}.
\end{equation*}
Then $\{f_n\}$ is pointwise bounded, hence equicontinuous by (7.0.2) of \cite{wilansky}. Since  $f_n\to 0$ on $\phi$ then also $f_n\to 0$ on $\overline{\phi}$ by (7.0.3) of \cite{wilansky}. This is the desired conclusion.
\end{proof}

\begin{theorem}
	\label{Ff}
	Let $X$ be an FK-space $\supset \phi $. Then $D_{p}^{q}F^+=X^{fd}$.
\end{theorem}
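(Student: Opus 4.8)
The plan is to show the two defining descriptions of $D_{p}^{q}F^+$ in Definition~\ref{subseq} already exhibit it as a $d$-dual of $X^f$, exactly as Theorem~\ref{DB} does for $D_{p}^{q}B^+$ and $X^{fd[b]}$. Recall from the definition that
\begin{equation*}
D_{p}^{q}F^+ = \left\{ x\in w : \{x_n f(\delta^n)\}\in \sigma_{p}^{q}[s] \text{ for all } f\in X' \right\}.
\end{equation*}
So the first step is simply to observe that, by the definition of $X^f = \{\{f(\delta^k)\} : f\in X'\}$, the condition ``$\{x_n f(\delta^n)\}\in\sigma_{p}^{q}[s]$ for all $f\in X'$'' is literally the same as ``$x.u\in\sigma_{p}^{q}[s]$ for all $u\in X^f$'', where $x.u=(x_nu_n)$.

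The second step is to match this with the definition of the $d$-dual. By the $d$-duality introduced before Theorem~2.1, for a sequence space $Z$ we have $Z^{d} = \{x\in w : x.y\in \sigma_{p}^{q}[s] \text{ for all } y\in Z\}$. Applying this with $Z = X^f$ gives precisely
\begin{equation*}
(X^f)^{d} = \left\{ x\in w : x.u\in \sigma_{p}^{q}[s] \text{ for all } u\in X^f \right\} = D_{p}^{q}F^+,
\end{equation*}
and since $X^{fd}$ is notation for $(X^f)^{d}$, this is the claim. The argument is therefore a one-line unwinding of definitions, with the only substantive input being the identification of $X^f$ with the set of functional-coordinate sequences — which is how $X^f$ is defined — so that quantifying over $f\in X'$ and quantifying over $u\in X^f$ come to the same thing.

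There is no real obstacle here; the proof is of the same flavor as the proof of Theorem~\ref{DB}, and indeed the statement is stated in Definition~\ref{subseq} itself as the chain of equalities ending in $(X^{f})^{d}$. The only point one should be slightly careful about is that when passing from ``for all $f\in X'$'' to ``for all $u\in X^f$'' one uses that the map $f\mapsto\{f(\delta^k)\}$ is onto $X^f$ by definition, so every $u\in X^f$ arises from some $f\in X'$ and conversely; this makes the two quantified conditions equivalent. Thus I would write: by Definition~\ref{subseq}, $z\in D_{p}^{q}F^+$ if and only if $z.u\in\sigma_{p}^{q}[s]$ for each $u\in X^f$, which is exactly the statement that $z\in(X^f)^{d}=X^{fd}$.
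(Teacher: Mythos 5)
Your proof is correct and is essentially the paper's own argument: the paper proves Theorem~\ref{Ff} by repeating the proof of Theorem~\ref{DB} with $d$ in place of $d[b]$, i.e.\ by unwinding Definition~\ref{subseq} so that $z\in D_{p}^{q}F^{+}$ iff $z.u\in\sigma_{p}^{q}[s]$ for every $u\in X^{f}$, which is exactly your identification of the quantifier over $f\in X'$ with the quantifier over $u\in X^{f}$. Your remark that the map $f\mapsto\{f(\delta^{k})\}$ is onto $X^{f}$ by definition is the only point needing care, and you handle it correctly.
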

\begin{proof}
This may be proved as in Theorem \ref{DB}, with $d$ instead of $d[b]$ .
\end{proof}

\begin{theorem}
	Let $X$ be an FK-space $\supset \phi $. Then $D_{p}^{q}F^+$ is the same for all FK-spaces $Y$ between $\overline{\phi}$ and $X$; i.e., $\overline{\phi}\subset Y\subset X$ implies $D_{p}^{q}F^+(Y)=D_{p}^{q}F^+(X)$ $($The closure
	of $\phi$ is calculated in $X)$.
\end{theorem}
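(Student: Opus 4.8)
The statement to prove is that $D_{p}^{q}F^+(Y) = D_{p}^{q}F^+(X)$ whenever $\overline{\phi} \subset Y \subset X$, with closure taken in $X$. This is the exact analogue of Theorem \ref{DBphi} for $F^+$ instead of $B^+$, so the proof should follow the same template, using Theorem \ref{Ff} (which identifies $D_{p}^{q}F^+ = X^{fd}$) in place of Theorem \ref{DB}.

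My plan:

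\textbf{Proof proposal.}
The plan is to mimic the proof of Theorem \ref{DBphi} verbatim, replacing $B^+$ by $F^+$, $d[b]$ by $d$, and the citation of Theorem \ref{DB} by Theorem \ref{Ff}. First, since $\overline{\phi} \subset Y \subset X$ are all FK-spaces, monotonicity of $D_{p}^{q}F^+$ (Theorem \ref{monotone}) gives the chain of inclusions
\begin{equation*}
D_{p}^{q}F^+(\overline{\phi}) \subset D_{p}^{q}F^+(Y) \subset D_{p}^{q}F^+(X).
\end{equation*}
It therefore suffices to show that the two outer terms coincide, i.e.\ $D_{p}^{q}F^+(\overline{\phi}) = D_{p}^{q}F^+(X)$. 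By Theorem \ref{Ff}, $D_{p}^{q}F^+(X) = X^{fd}$ and $D_{p}^{q}F^+(\overline{\phi}) = (\overline{\phi})^{fd}$, so the claim reduces to $(\overline{\phi})^f = X^f$; the $d$-dual is then the same on both sides. The equality $(\overline{\phi})^f = X^f$ is exactly the content of (7.2.4) of \cite{wilansky}, which states that the $f$-dual of an FK-space containing $\phi$ depends only on the closure of $\phi$ in that space. Squeezing the middle term between the two equal outer terms finishes the proof.

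\textbf{Main obstacle.} There is no genuine obstacle: every ingredient (monotonicity, the identification $D_{p}^{q}F^+ = X^{fd}$, and the Wilansky fact that $X^f = (\overline{\phi})^f$) is already available, so the argument is a two-line assembly. The only point requiring a modicum of care is the bookkeeping about where closures are computed: the $\overline{\phi}$ appearing as the smallest FK-space in the chain is the closure of $\phi$ inside $X$, and one should note that this same space is obtained as the closure of $\phi$ inside $Y$ as well (since $Y$ is a closed — hence FK — subspace situation is not automatic, but $\overline{\phi}^X \subset Y$ forces $\overline{\phi}^Y = \overline{\phi}^X$), so the notation $D_{p}^{q}F^+(\overline{\phi})$ is unambiguous. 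This is precisely the parenthetical remark in the statement, and it is handled exactly as in Theorem \ref{DBphi}.
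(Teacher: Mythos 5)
Your proof is correct and is exactly the argument the paper intends: the paper's proof of this theorem is just "similar to Theorem \ref{DBphi}," i.e.\ monotonicity gives $D_{p}^{q}F^+(\overline{\phi})\subset D_{p}^{q}F^+(Y)\subset D_{p}^{q}F^+(X)$, and the outer terms agree by Theorem \ref{Ff} together with $(\overline{\phi})^f=X^f$ from (7.2.4) of \cite{wilansky}. (Your side remark about $\overline{\phi}$ computed in $Y$ is not needed for this argument, which only ever uses the closure taken in $X$.)
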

The proof is similar to that of Theorem \ref{DBphi}.

\begin{lemma}
	\label{fphi}
	Let $X$ be an FK-space in which $\overline{\phi}$ has $\sigma_{p}^{q} [K]$. Then $D_{p}^{q}F^+=(\overline{\phi})^{dd}$.
\end{lemma}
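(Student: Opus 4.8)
The plan is to combine the identification $D_{p}^{q}F^{+}=X^{fd}$ from Theorem \ref{Ff} with the hypothesis that $\overline{\phi}$ has $\sigma_{p}^{q}[K]$, and to reduce everything to computations of $f$-, $d$- and $d[b]$-duals of $\overline{\phi}$. First I would recall that, since $\overline{\phi}$ has $\sigma_{p}^{q}[K]$, it is in particular a $\sigma_{p}^{q}[K]$-space, so part (ii) of Theorem 1.1 (applied with $X$ replaced by $\overline{\phi}$, whose topology is the FK-topology inherited from $X$) gives $(\overline{\phi})^{f}=(\overline{\phi})^{d}$. Also $\overline{\phi}$ is an AD-space by definition of the closure, and here it is even an AK-type space for the deferred Ces\`{a}ro method, so part (iii) of Theorem 1.1 gives $(\overline{\phi})^{d[b]}=(\overline{\phi})^{d}$. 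Hence all three duals $(\overline{\phi})^{f}$, $(\overline{\phi})^{d}$, $(\overline{\phi})^{d[b]}$ coincide; call this common space $Z$.

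Next I would invoke Theorem \ref{Ff} for the space $\overline{\phi}$ to get $D_{p}^{q}F^{+}(\overline{\phi})=(\overline{\phi})^{fd}$, and then the theorem following Theorem \ref{Ff} (stability of $D_{p}^{q}F^{+}$ for FK-spaces between $\overline{\phi}$ and $X$) to obtain $D_{p}^{q}F^{+}(X)=D_{p}^{q}F^{+}(\overline{\phi})=(\overline{\phi})^{fd}$. It then remains to show $(\overline{\phi})^{fd}=(\overline{\phi})^{dd}$. But $(\overline{\phi})^{f}=Z=(\overline{\phi})^{d}$ from the previous paragraph, so $(\overline{\phi})^{fd}=Z^{d}=(\overline{\phi})^{dd}$, which is exactly the claim.

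I expect the only delicate point to be the bookkeeping about which space carries which FK-topology: one must be careful that $\overline{\phi}$, taken as a closed subspace of $X$, is itself an FK-space containing $\phi$ (so that Theorems 1.1 and \ref{Ff} legitimately apply to it), that $\overline{\phi}$ having $\sigma_{p}^{q}[K]$ makes it a $\sigma_{p}^{q}[K]$-space and an AD-space in its own topology, and that the $d$- and $d[b]$-duals are computed with respect to that same topology throughout; the stability theorem after Theorem \ref{Ff} is what guarantees the passage from $\overline{\phi}$ back up to $X$. Once these identifications are in place the proof is just the short chain $D_{p}^{q}F^{+}=D_{p}^{q}F^{+}(\overline{\phi})=(\overline{\phi})^{fd}=(\overline{\phi})^{dd}$, using $(\overline{\phi})^{f}=(\overline{\phi})^{d}$ in the middle.
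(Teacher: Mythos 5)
Your proof is correct and takes essentially the paper's route: both arguments reduce the lemma to the identity $(\overline{\phi})^{fd}=(\overline{\phi})^{dd}$ after identifying $D_{p}^{q}F^{+}$ with $(\overline{\phi})^{fd}$. The difference is only in the intermediate citations: the paper writes $D_{p}^{q}F^{+}=X^{fd}$ (Theorem \ref{Ff}) and then $X^{f}=(\overline{\phi})^{f}$ by Wilansky 7.2.4, finishing by citing Theorem 1.9 of Goes, whereas you pass from $X$ down to $\overline{\phi}$ via the stability theorem for $D_{p}^{q}F^{+}$ (whose proof rests on the same 7.2.4) and then get $(\overline{\phi})^{f}=(\overline{\phi})^{d}$ from Theorem 1.1(ii) applied to $\overline{\phi}$; both packagings are legitimate, and your bookkeeping about $\overline{\phi}$ being an FK-space containing $\phi$ with the inherited topology is exactly the right care to take. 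The one point to flag is that Theorem 1.1(ii) is stated under the extra hypothesis $\lim_{n}\frac{q(n)-i+1}{q(n)-p(n)}=1$, which the lemma does not assume; the inclusion you actually need from it, $(\overline{\phi})^{d}\subset(\overline{\phi})^{f}$, does hold without that hypothesis, since for fixed $i$ the coefficient of $u_{i}$ in the defining limit is $\frac{q(n)-\max(p(n),\,i-1)}{q(n)-p(n)}\to 1$ because $q(n)\to\infty$, but you should either remark on this or, as the paper does, invoke Goes's theorem instead. Your side observation that $(\overline{\phi})^{d[b]}=(\overline{\phi})^{d}$ is not needed for the argument.
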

\begin{proof}
Observe that $D_{p}^{q}F^+=X^{fd}$ by Theorem \ref{Ff}. Since $X^f=(\overline{\phi})^f$ by Theorem (7.2.4) of \cite{wilansky}, we have $X^{fd}=(\overline{\phi})^{fd}$. Hence, by Theorem 1.9 of \cite{goes3} the result follows.
\end{proof}

An FK-space $X$ is said to have $F\sigma_{p}^{q} [K]$ (functional $\sigma_{p}^{q} [K]$) if $X\subset D_{p}^{q}F^+ $ i.e., $X=D_{p}^{q}F$.

\begin{theorem}
	\label{fk}
	Let $X$ be an FK-space $\supset \phi $. Then $X$ has $F\sigma_{p}^{q} [K]$ if and only if $\overline{\phi}$ has $\sigma_{p}^{q} [K]$ and $X\subset(\overline{\phi})^{dd} $.
\end{theorem}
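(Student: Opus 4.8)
The plan is to deduce both implications from the duality identities already established, principally Theorem \ref{Ff} ($D_{p}^{q}F^+ = X^{fd}$), Lemma \ref{fphi} ($D_{p}^{q}F^+=(\overline{\phi})^{dd}$ when $\overline{\phi}$ has $\sigma_{p}^{q}[K]$) and Theorem \ref{swphi}. For the ``if'' direction I would simply note that if $\overline{\phi}$ has $\sigma_{p}^{q}[K]$, then Lemma \ref{fphi} rewrites the hypothesis $X\subset(\overline{\phi})^{dd}$ as $X\subset D_{p}^{q}F^+$, which is by definition the statement that $X$ has $F\sigma_{p}^{q}[K]$ (equivalently $X=D_{p}^{q}F$ since $D_{p}^{q}F=D_{p}^{q}F^+\cap X$).

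For the ``only if'' direction, assume $X$ has $F\sigma_{p}^{q}[K]$, i.e. $X\subset D_{p}^{q}F^+$. First I would establish the inclusion $D_{p}^{q}F^+\subset D_{p}^{q}B^+$: by Theorems \ref{Ff} and \ref{DB} these spaces are $X^{fd}$ and $X^{fd[b]}$, and $Y^{d}\subset Y^{d[b]}$ for every set $Y$ because a sequence whose deferred Ces\`aro transform converges is automatically bounded, i.e. $\sigma_{p}^{q}[s]\subset\sigma_{p}^{q}[b]$. Combining this with $\overline{\phi}\subset X$ gives $\overline{\phi}\subset X\subset D_{p}^{q}B^+$, and since $D_{p}^{q}B=D_{p}^{q}B^+\cap X$ this forces $D_{p}^{q}B=X\supset\overline{\phi}$. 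Now Theorem \ref{swphi} applies to $X$ and yields that $\overline{\phi}$ has $\sigma_{p}^{q}[K]$.

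With $\sigma_{p}^{q}[K]$ on $\overline{\phi}$ in hand, Lemma \ref{fphi} gives $D_{p}^{q}F^+=(\overline{\phi})^{dd}$, and since $X\subset D_{p}^{q}F^+$ we conclude $X\subset(\overline{\phi})^{dd}$, which together with the previous step is exactly the desired conclusion. The whole argument is really bookkeeping among the $f$-, $d$- and $d[b]$-dual identities; the only step that requires a moment's thought is the implication $X\subset D_{p}^{q}F^+\Rightarrow D_{p}^{q}B=X$, since it is this that unlocks Theorem \ref{swphi}. I do not expect a genuine obstacle, but I would be careful that all closures of $\phi$ are taken in $X$, so that Lemma \ref{fphi} and Theorem \ref{swphi} are being applied to one and the same space $\overline{\phi}$.
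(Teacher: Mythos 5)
Your argument is correct and follows essentially the same route as the paper: sufficiency is Lemma \ref{fphi} applied directly, and necessity runs through $\overline{\phi}\subset D_{p}^{q}B$, Theorem \ref{swphi}, and then Lemma \ref{fphi} again. The only cosmetic difference is that you obtain $D_{p}^{q}F^{+}\subset D_{p}^{q}B^{+}$ from the dual identities $X^{fd}\subset X^{fd[b]}$ (via $\sigma_{p}^{q}[s]\subset\sigma_{p}^{q}[b]$), whereas the paper just cites the inclusion $D_{p}^{q}F\subset D_{p}^{q}B$ of Theorem \ref{inclusion}; both are valid.
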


\begin{proof}
	Necessity. $X$ has $\sigma_{p}^{q} [B]$ since $D_{p}^{q}F\subset D_{p}^{q}B$ so $\overline{\phi}$ has $\sigma_{p}^{q} [K]$ by Theorem \ref{swphi}. The
	remainder of the proof follows from Lemma \ref{fphi}. Sufficiency is given by Lemma \ref{fphi}.
\end{proof}

\begin{theorem}
	Let $X$ be an FK-space $\supset \phi $. The following are equivalent:
	
	i) $X$ has $F\sigma_{p}^{q} [K]$ ,
	
	ii) $X\subset (D_{p}^{q}S)^{dd}$ ,
	
	iii) $X\subset (D_{p}^{q}W)^{dd}$ ,
	
	iv) $X\subset (D_{p}^{q}F)^{dd}$ ,
	
	v) $X^{d}= (D_{p}^{q}S)^{d}$ ,
	
	vi) $X^{d}= (D_{p}^{q}F)^{d}$ .
\end{theorem}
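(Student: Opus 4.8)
The plan is to establish the cycle of implications (i) $\Rightarrow$ (ii) $\Rightarrow$ (iii) $\Rightarrow$ (iv) $\Rightarrow$ (vi) $\Rightarrow$ (v) $\Rightarrow$ (i), drawing on the monotonicity of the subspace operators (Theorem \ref{monotone}), the chain $\phi \subset D_{p}^{q}S \subset D_{p}^{q}W \subset D_{p}^{q}F$ (Theorem \ref{inclusion}), the characterization $X$ has $F\sigma_{p}^{q}[K]$ iff $\overline{\phi}$ has $\sigma_{p}^{q}[K]$ and $X \subset (\overline{\phi})^{dd}$ (Theorem \ref{fk}), and the basic duality facts $E \subset E^{dd}$, $E^{ddd}=E^{d}$, and $E \subset F \Rightarrow F^{d} \subset E^{d}$ listed just before the first theorem of the excerpt.

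First I would handle (i) $\Rightarrow$ (ii) through (iv). Assume $X$ has $F\sigma_{p}^{q}[K]$; by Theorem \ref{fk}, $\overline{\phi}$ has $\sigma_{p}^{q}[K]$, so $\overline{\phi} \subset D_{p}^{q}S$ (since membership in $D_{p}^{q}S$ is exactly having $\sigma_{p}^{q}[K]$ in $X$, and every element of $\overline{\phi}$ has it), while also $D_{p}^{q}S \subset \overline{\phi}$ by Theorem \ref{inclusion}; hence $D_{p}^{q}S = D_{p}^{q}W = \overline{\phi}$ (this equality also follows from Theorem \ref{swphi} once one knows $D_{p}^{q}B \supset \overline{\phi}$, which holds since $F\sigma_{p}^{q}[K]$ forces $X = D_{p}^{q}F \subset D_{p}^{q}B$ and $\overline{\phi}\subset X$). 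Consequently $(D_{p}^{q}S)^{dd} = (D_{p}^{q}W)^{dd} = (\overline{\phi})^{dd} \supset X$ by Theorem \ref{fk}, giving (ii) and (iii). For (iv), note $D_{p}^{q}F \supset D_{p}^{q}W = \overline{\phi}$, while $D_{p}^{q}F \subset D_{p}^{q}F^{+} = X^{fd} = (\overline{\phi})^{fd} \subset (\overline{\phi})^{dd}$ using Theorem \ref{Ff}, the equality $X^{f}=(\overline{\phi})^{f}$ from Theorem 7.2.4 of \cite{wilansky}, and $\phi^{f} \subset w = \phi^{dd}$-type reasoning; applying $(\cdot)^{dd}$ and antitone-composed-antitone (hence monotone) behaviour yields $X \subset (\overline{\phi})^{dd} \subset (D_{p}^{q}F)^{dd}$.

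Next I would close the loop via the $d$-dual conditions. From $D_{p}^{q}S \subset D_{p}^{q}F \subset X$ (Theorem \ref{inclusion}) the antitone property gives $X^{d} \subset (D_{p}^{q}F)^{d} \subset (D_{p}^{q}S)^{d}$ unconditionally, so (v) and (vi) each amount to the reverse inclusion $(D_{p}^{q}S)^{d} \subset X^{d}$, respectively $(D_{p}^{q}F)^{d} \subset X^{d}$, collapsing the chain. For (iv) $\Rightarrow$ (vi): from $X \subset (D_{p}^{q}F)^{dd}$ apply $(\cdot)^{d}$ to get $(D_{p}^{q}F)^{ddd} \subset X^{d}$, and $(D_{p}^{q}F)^{ddd} = (D_{p}^{q}F)^{d}$ by the identity $E^{ddd}=E^{d}$; combined with the trivial inclusion this gives (vi). Symmetrically (ii) $\Rightarrow$ (v). Then (vi) $\Rightarrow$ (v) because $X^{d}=(D_{p}^{q}F)^{d}$ and $D_{p}^{q}S \subset D_{p}^{q}F \subset X$ squeeze $(D_{p}^{q}S)^{d}$ between $(D_{p}^{q}F)^{d}=X^{d}$ and $X^{d}$.

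Finally, (v) $\Rightarrow$ (i). From $X^{d} = (D_{p}^{q}S)^{d}$ apply $(\cdot)^{d}$ to obtain $X^{dd} = (D_{p}^{q}S)^{dd} \supset D_{p}^{q}S$; the point, however, is to reach the hypotheses of Theorem \ref{fk}. Here I expect the main obstacle: one must produce that $\overline{\phi}$ has $\sigma_{p}^{q}[K]$ and that $X \subset (\overline{\phi})^{dd}$ from the single $d$-dual identity. The route I would take is to first show $D_{p}^{q}B \supset \overline{\phi}$ — arguing that $X^{d}=(D_{p}^{q}S)^{d}$ forces, via $D_{p}^{q}F^{+}=X^{fd}$ and the $f$/$d$-dual bookkeeping, the inclusion $\overline{\phi} \subset X^{fd}\cap X = D_{p}^{q}F \subset D_{p}^{q}B$ — and then invoke Theorem \ref{swphi} to conclude $\overline{\phi}$ has $\sigma_{p}^{q}[K]$ and $D_{p}^{q}S = \overline{\phi}$. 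With that, $X^{d} = (D_{p}^{q}S)^{d} = (\overline{\phi})^{d}$, whence $X \subset X^{dd} = (\overline{\phi})^{dd}$; Theorem \ref{fk} then delivers $F\sigma_{p}^{q}[K]$. The delicate step is the passage from equality of $d$-duals to containment $\overline{\phi}\subset D_{p}^{q}B$; if that turns out to need more than formal duality manipulation, the fallback is to derive it from Lemma \ref{fphi} by showing directly that $(\overline{\phi})^{dd} = (D_{p}^{q}S)^{dd} \supset X$ and that $\overline{\phi}$ carries $\sigma_{p}^{q}[K]$ as a consequence of $\overline{\phi}\subset (\overline{\phi})^{dd}$ sitting inside an FK-space with the relevant boundedness, again routed through Theorem \ref{swphi}.
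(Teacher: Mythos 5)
Your forward half is fine and essentially the paper's route: (i) $\Rightarrow$ (ii) via Theorem \ref{fk} and $D_{p}^{q}S=D_{p}^{q}W=\overline{\phi}$, the trivial monotone steps (ii) $\Rightarrow$ (iii) $\Rightarrow$ (iv), and (ii) $\Rightarrow$ (v), (iv) $\Rightarrow$ (vi) by applying $(\cdot)^{d}$ together with $E^{ddd}=E^{d}$ and $E\subset X \Rightarrow X^{d}\subset E^{d}$. But your cycle never soundly returns to (i), and this is where the whole content of the theorem sits. Concretely: (a) your step (vi) $\Rightarrow$ (v) by ``squeezing'' is wrong. From $D_{p}^{q}S\subset D_{p}^{q}F\subset X$ the antitone property gives $X^{d}\subset (D_{p}^{q}F)^{d}\subset (D_{p}^{q}S)^{d}$, so $(D_{p}^{q}S)^{d}$ lies \emph{above} both sets you claim pinch it; the hypothesis $X^{d}=(D_{p}^{q}F)^{d}$ gives no control on the inclusion $(D_{p}^{q}F)^{d}\subset (D_{p}^{q}S)^{d}$ and hence does not yield $(D_{p}^{q}S)^{d}\subset X^{d}$. (b) Your step (v) $\Rightarrow$ (i) is left unproved: the passage from the $d$-dual identity to $\overline{\phi}\subset D_{p}^{q}B$ is only gestured at (``$f$/$d$-dual bookkeeping''), and the fallback you sketch is equally unsubstantiated. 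So the equivalence is not established.

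The missing ingredient is the paper's key move through the $f$-dual. Since $D_{p}^{q}S\subset D_{p}^{q}W\subset D_{p}^{q}F\subset D_{p}^{q}F^{+}=X^{fd}$ (Theorem \ref{Ff}), antitonicity gives $X^{fdd}\subset E^{d}$ for $E$ any of these subspaces. Hence if $X\subset E^{dd}$ (equivalently $E^{d}\subset X^{d}$, which is also the content of (v) and (vi)), then
\begin{equation*}
X^{f}\subset X^{fdd}\subset E^{d}\subset X^{d},
\end{equation*}
and therefore $X\subset X^{dd}\subset X^{fd}=D_{p}^{q}F^{+}$, i.e.\ $X=D_{p}^{q}F$, which is (i). This is exactly how the paper closes the circle (its (iv) $\Rightarrow$ (i), invoking Theorem 1.9 of Goes at the last step), and the same computation with $E=D_{p}^{q}S$ or $D_{p}^{q}F$ repairs both of your broken links: it gives (v) $\Rightarrow$ (i) directly, and (vi) $\Rightarrow$ (i) $\Rightarrow$ (ii) $\Rightarrow$ (v). Note that once this $f$-dual argument is available, your detour through Theorem \ref{fk} and Theorem \ref{swphi} (producing ``$\overline{\phi}$ has $\sigma_{p}^{q}[K]$'' and ``$\overline{\phi}\subset D_{p}^{q}B$'') becomes unnecessary; conversely, without it those hypotheses are not obtainable by the purely formal $d$-dual manipulations you rely on.
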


\begin{proof}
Observe that (ii) implies (iii) and (iii) implies (iv) and that they are trivial
since
\begin{equation*}
D_{p}^{q}S\subset D_{p}^{q}W\subset D_{p}^{q}F \ .
\end{equation*}
If (iv) is true, then $X^f\subset (D_{p}^{q}F)^{d}= (X^{f})^{dd}\subset X^{d}$ so (i) is true by Theorem 1.9 of
\cite{goes3}. If (i) holds, then Theorem \ref{fk} implies that $\overline{\phi}=D_{p}^{q}S $ and that (ii) holds. The
equivalence of (v), (vi) with the others is clear.
\end{proof}

\begin{theorem}
	Let $X$ be an FK-space $\supset \phi $. The following are equivalent:
	
	i) $X$ has $S\sigma_{p}^{q} [K]$ ,
	
	ii) $X$ has $\sigma_{p}^{q} [K]$ ,
	
	iii) $X^{d}= X'$ .
\end{theorem}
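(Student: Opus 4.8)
The plan is to treat (ii)$\Leftrightarrow$(iii) as the substantive part and then fold in (i). Throughout I would use the inclusions of Theorem \ref{inclusion}, namely $\phi\subset D_{p}^{q}S\subset D_{p}^{q}W\subset D_{p}^{q}F\subset D_{p}^{q}B\subset X$ and $D_{p}^{q}W\subset\overline{\phi}$. It is also essential to fix the meaning of ``$X^{d}=X'$''. For $u\in X^{d}$ the map $y\mapsto\lim_{n}\frac{1}{q(n)-p(n)}\sum_{k=p(n)+1}^{q(n)}\sum_{j=1}^{k}u_{j}y_{j}$ is a continuous linear functional on $X$ by Banach-Steinhaus, and its $\delta^{k}$-value is $u_{k}$ (since $q(n)\to\infty$); hence $X^{d}\subset X^{f}$ always and the natural map $X^{d}\to X'$ is injective. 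Consequently ``$X^{d}=X'$'' means precisely that this map is onto, i.e.\ that $X^{f}\subset X^{d}$ (so $X^{d}=X^{f}$) and $X$ has AD.

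For (ii)$\Rightarrow$(iii): if $X$ has $\sigma_{p}^{q}[K]$, then $D_{p}^{q}S=X$, so $X=D_{p}^{q}S\subset\overline{\phi}\subset X$ shows $X$ has AD, while Theorem~1.1(ii) gives $X^{f}=X^{d}$; by the reading above this is ``$X^{d}=X'$''. (Equivalently, $D_{p}^{q}S=X$ forces $D_{p}^{q}W=X$ by Theorem \ref{inclusion}, and the second, dual description of $D_{p}^{q}W$ in Definition \ref{subseq} then says exactly that $(f(\delta^{k}))\in X^{d}$ with induced functional $f$, for every $f\in X'$.)

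For (iii)$\Rightarrow$(ii): assume $X^{d}=X'$. For each $f\in X'$ we have $(f(\delta^{k}))\in X^{f}=X^{d}$, so $g(y):=\lim_{n}\frac{1}{q(n)-p(n)}\sum_{k=p(n)+1}^{q(n)}\sum_{j=1}^{k}f(\delta^{j})y_{j}$ defines $g\in X'$ with $g(\delta^{k})=f(\delta^{k})$; since $X$ has AD, $g=f$, whence $f\bigl(\frac{1}{q(n)-p(n)}\sum_{k=p(n)+1}^{q(n)}x^{(k)}\bigr)\to f(x)$ for all $x\in X$ and all $f\in X'$. Thus $D_{p}^{q}W=X$, so $\overline{\phi}\subset X=D_{p}^{q}W\subset D_{p}^{q}B$, in particular $\overline{\phi}\subset D_{p}^{q}B$. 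By Theorem \ref{swphi}, $D_{p}^{q}S=D_{p}^{q}W=\overline{\phi}$; combined with $D_{p}^{q}W=X$ this gives $D_{p}^{q}S=X$, i.e.\ $X$ has $\sigma_{p}^{q}[K]$.

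Finally (i)$\Leftrightarrow$(ii) runs along the same track: unwinding the definition of $S\sigma_{p}^{q}[K]$ --- the scalar, i.e.\ weak, form of $\sigma_{p}^{q}[K]$ --- yields $D_{p}^{q}W=X$; the implication $D_{p}^{q}W=X\Rightarrow D_{p}^{q}S=X$ is the very step already isolated in (iii)$\Rightarrow$(ii) (it is Theorem \ref{swphi}, via $\overline{\phi}\subset D_{p}^{q}W\subset D_{p}^{q}B$), while $D_{p}^{q}S=X\Rightarrow D_{p}^{q}W=X$ is immediate from Theorem \ref{inclusion}. I expect the one genuine obstacle to be (iii)$\Rightarrow$(ii): one must see that the purely dual hypothesis $X^{d}=X'$ already carries inside it both the density $X=\overline{\phi}$ and the weak sectional convergence $D_{p}^{q}W=X$, and then pass from weak convergence to FK convergence of the deferred Ces\`{a}ro sections --- a step that cannot be done by direct computation and that rests precisely on the equicontinuity (Convergence Lemma) argument packaged in Theorem \ref{swphi}.
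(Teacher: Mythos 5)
Your proposal is correct and takes essentially the same route as the paper: the paper also deduces (iii)$\Rightarrow$ weak sectional convergence by reading off from the representation $f(x)=\lim_n\frac{1}{q(n)-p(n)}\sum_{k=p(n)+1}^{q(n)}\sum_{j=1}^{k}u_jx_j$ (with $u_j=f(\delta^j)$) that every $x\in X$ lies in $D_{p}^{q}W$, and it passes from $D_{p}^{q}W=X$ to $\sigma_{p}^{q}[K]$ exactly as you do, via AD ($D_{p}^{q}W\subset\overline{\phi}$), $\sigma_{p}^{q}[B]$ ($D_{p}^{q}W\subset D_{p}^{q}B$) and Theorem \ref{swphi}. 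The only difference is cosmetic: the paper dismisses (ii)$\Rightarrow$(iii) with a citation to Goes, whereas you prove it directly; your parenthetical argument via the dual description of $D_{p}^{q}W$ is preferable to invoking Theorem 1.1(ii), since it avoids that theorem's extra hypothesis on $p(n),q(n)$.
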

\begin{proof}
Clearly (ii) implies (i). Conversely if $X$ has $S\sigma_{p}^{q} [K]$ it must have AD for $D_{p}^{q}W\subset \overline{\phi}$ by Theorem \ref{inclusion}. It also has $\sigma_{p}^{q} [B]$ since $D_{p}^{q}W\subset D_{p}^{q}B$. Thus $X$ has $\sigma_{p}^{q} [K]$ by Theorem \ref{swphi}, this proves that (i) and (ii) are equivalent. Assume that (iii) holds. Let $f\in X'$, then there exists $u\in X^{d}$ such that
\begin{equation*}
f(x)= \lim_{n\to\infty} \frac{1}{q(n)-p(n)}\sum_{k=p(n)+1}^{q(n)}\sum_{j=1}^{k}u_jx_j
\end{equation*}
for $x\in X$. Since $f(\delta^j)=u_j$, it follows that each $x\in D_{p}^{q}W$ which shows that (iii) implies (i). That (ii) implies (iii) is known (see \cite{goes} , page 97).
\end{proof}

\begin{theorem}
	Let $X$ be an FK-space $\supset \phi $. The following are equivalent:
	
	i) $D_{p}^{q}W$ is closed in $X$,
	
	ii) $\overline{\phi}\subset D_{p}^{q}B $,
	
	iii) $\overline{\phi}\subset D_{p}^{q}F $,
	
	iv) $\overline{\phi}= D_{p}^{q}W $,
	
	v) $\overline{\phi}= D_{p}^{q}S $,
	
	vi) $D_{p}^{q}S$ is closed in $X$.
\end{theorem}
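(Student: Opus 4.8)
The plan is to prove the cycle of equivalences $(i)\Rightarrow(ii)\Rightarrow(iii)\Rightarrow(iv)\Rightarrow(v)\Rightarrow(vi)\Rightarrow(i)$, exploiting the inclusion chain of Theorem \ref{inclusion}, namely $\phi\subset D_{p}^{q}S\subset D_{p}^{q}W\subset\overline{\phi}$ and $D_{p}^{q}W\subset D_{p}^{q}F\subset D_{p}^{q}B\subset X$, together with Theorem \ref{swphi}. First, for $(i)\Rightarrow(iv)$: if $D_{p}^{q}W$ is closed in $X$, then since $\phi\subset D_{p}^{q}W\subset\overline{\phi}$ and $D_{p}^{q}W$ is closed, taking closures gives $\overline{\phi}\subset D_{p}^{q}W$, hence $\overline{\phi}=D_{p}^{q}W$. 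The implication $(iv)\Rightarrow(iii)$ is immediate from $\overline{\phi}=D_{p}^{q}W\subset D_{p}^{q}F$, and $(iii)\Rightarrow(ii)$ is immediate from $D_{p}^{q}F\subset D_{p}^{q}B$.

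The key substantive step is $(ii)\Rightarrow(v)$. Assuming $\overline{\phi}\subset D_{p}^{q}B$, Theorem \ref{swphi} applies directly and yields that $\overline{\phi}$ has $\sigma_{p}^{q}[K]$ and $D_{p}^{q}S=D_{p}^{q}W=\overline{\phi}$; in particular $\overline{\phi}=D_{p}^{q}S$, which is $(v)$. Then $(v)\Rightarrow(vi)$ follows because $\overline{\phi}$ is always a closed subspace of $X$, so $D_{p}^{q}S=\overline{\phi}$ is closed. Finally $(vi)\Rightarrow(i)$: if $D_{p}^{q}S$ is closed in $X$, then as with $(i)\Rightarrow(iv)$ we get $\overline{\phi}\subset D_{p}^{q}S$ from $\phi\subset D_{p}^{q}S\subset\overline{\phi}$, so $D_{p}^{q}S=\overline{\phi}$; by Theorem \ref{swphi} (reached via $\overline{\phi}=D_{p}^{q}S\subset D_{p}^{q}B$) we also get $D_{p}^{q}W=\overline{\phi}$, which is closed in $X$, giving $(i)$. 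One can equally close the loop by noting $(v)$ already forces $D_{p}^{q}W=\overline{\phi}$ through Theorem \ref{swphi}.

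The main obstacle, such as it is, lies in $(ii)\Rightarrow(v)$: everything else is a formal consequence of the inclusion $\phi\subset D_{p}^{q}S\subset D_{p}^{q}W\subset\overline{\phi}$ and the trivial fact that $\overline{\phi}$ is closed, but the passage from a mere boundedness hypothesis ($\overline{\phi}\subset D_{p}^{q}B$) to a convergence conclusion ($\overline{\phi}$ has $\sigma_{p}^{q}[K]$) genuinely requires the equicontinuity/Convergence-Lemma argument packaged inside Theorem \ref{swphi}. I would therefore present the proof by first establishing the easy implications among $(i),(iii),(iv),(vi)$ using closures and the chain of inclusions, and then invoke Theorem \ref{swphi} once to bridge $(ii)$ to $(v)$ and thereby close the cycle. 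A minor point to check carefully is that taking the closure of $\phi$ is consistent across the steps — the closure is always computed in $X$, so $\overline{D_{p}^{q}S}\supset\overline{\phi}$ and $\overline{D_{p}^{q}W}\supset\overline{\phi}$ hold whenever the respective spaces contain $\phi$, which they do by Theorem \ref{inclusion}.
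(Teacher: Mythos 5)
Your proof is correct and takes essentially the same route as the paper: the only substantive step is (ii)$\Rightarrow$(v) via Theorem \ref{swphi}, and all remaining implications are formal consequences of the inclusions $\phi\subset D_{p}^{q}S\subset D_{p}^{q}W\subset\overline{\phi}$, $D_{p}^{q}W\subset D_{p}^{q}F\subset D_{p}^{q}B$ from Theorem \ref{inclusion} together with the closedness of $\overline{\phi}$. The only blemish is that the cycle you actually establish is (i)$\Rightarrow$(iv)$\Rightarrow$(iii)$\Rightarrow$(ii)$\Rightarrow$(v)$\Rightarrow$(vi)$\Rightarrow$(i), not the order announced in your first sentence; this is a harmless slip, since the chain still passes through all six statements and closes the loop.
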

\begin{proof}
(ii) implies (v): By Theorem \ref{swphi}, $\overline{\phi}$ has $\sigma_{p}^{q} [K]$ , i.e. $\overline{\phi}\subset D_{p}^{q}S$. The opposite inclusion is Theorem \ref{inclusion}. Note that (v) implies (iv), (iv) implies (iii) and (iii) implies
(ii) because
\begin{equation*}
D_{p}^{q}S \subset D_{p}^{q}W \subset \overline{\phi} \ ,~~ D_{p}^{q}W \subset D_{p}^{q}F \subset D_{p}^{q}B \ ;
\end{equation*}
(i) implies (iv) and (vi) implies (v) since $\phi \subset D_{p}^{q}S \subset D_{p}^{q}W \subset \overline{\phi}$. Finally (iv) implies (i) and (v) implies (vi).
\end{proof}

\section{Combinations of Some Subspaces of an FK-Space }

Let $A=(a_{nk})$ $n,k=1,2,\ldots$ be an infinite matrix with complex entires and $c_A=\{x:Ax\in c\}$. Then $c_A$ is an FK-space with seminorms $\rho_0(x)=\sup_n \left| \sum_{k=1}^{\infty}a_{nk}x_k \right|$ $(n=1,2,\ldots)$, $\rho_n(x)=|x_n|$, $(n=1,2,\ldots)$; and $h_n(x)=\sup_m\left| \sum_{k=1}^{m}a_{nk}x_k \right|$ $(n=1,2,\ldots)$. Also, every $f\in c'_A$ if and only if 
\begin{equation*}
f(x)=\sum_{k=1}^{\infty}\beta_kx_k+\sum_{n=1}^{\infty}t_n\sum_{k=1}^{\infty}a_{nk}x_k+\mu\lim{_A}x \ ,
\end{equation*}
where $t\in l$, $\mu\in\mathbb{C}$, $(\beta_k)\in c^{\beta}_A$, the $\beta$-dual of $c_A$ \cite{wilansky}. The representation is not unique; we say that $A$ is $\mu$-unique if all representations for some $f$ have the same $\mu$. If $A$ be $\mu$-unique, $c_A\subset c_D$, $D$ is conull with respect to $A$ if and only if $\mu_A(\lim_D)=0$ in \cite{wilansky4}.

Let $X$ and $Y$ be FK-spaces, $X$ with paranorm $\rho$ and $Y$ with paranorm $s$. It is shown that $Z=X+Y$ with the unrestricted inductive limit topology is an FK-space as in Theorem 4.5.1 of \cite{wilansky}. The paranorm $\tau$ of $Z$ is given by 
\begin{eqnarray}
\tau(z)=\inf_{\substack{x+y=z\\
x\in X, y\in Y}  }(\rho(x)+s(y)).  \nonumber
\end{eqnarray}
Let $\{X^n\}_{n=1}^{\infty}$ be a sequence of FK-spaces. $\rho_n$ the paranorm of $X^n$ and $\{s_{nk}\}_{k=1}^{\infty}$ be the seminorms of $X^n$. Let $Y=\underset{n}{\bigcap}X^n$. It is well known that $Y$ is an FK-space with paranorm $s=\sum_{n=1}^{\infty}\frac{\rho_n}{2^n(1+\rho_n)}$ and seminorms $\{s_{nk}\}_{n,k=1}^{\infty}$.

We now investigate some important subspaces of a locally convex FK-space $X$ containing $\phi$ which are analogous to these give in \cite{devos}. To prove the theorems of this section we use the same technique by DeVos in \cite{devos}.

\begin{theorem}
	\label{E(X)}
	Let $X$, $Y$ be FK-spaces and $Z=X+Y$. Then $E(X)+E(Y)\subseteq E(Z)$ for $E=D_{p}^{q}S$, $D_{p}^{q}W$, $D_{p}^{q}F$ or $D_{p}^{q}B$.
\end{theorem}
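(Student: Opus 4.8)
The plan is to reduce everything to the defining decompositions of the four subspaces and to exploit the description of the topology of $Z = X + Y$ together with the monotonicity already established in Theorem \ref{monotone}. The key structural fact I would use is that the inclusion maps $X \hookrightarrow Z$ and $Y \hookrightarrow Z$ are continuous (this is part of the content of Theorem 4.5.1 of \cite{wilansky} applied to the unrestricted inductive limit, and it also follows from Corollary 4.2.4 of \cite{wilansky} once one knows $Z$ is an FK-space). Continuity of $X \hookrightarrow Z$ already gives $E(X) \subseteq E(Z)$ for $E = D_{p}^{q}S, D_{p}^{q}W$ directly — indeed this is exactly Theorem \ref{monotone}, since $X \subset Z$ — and similarly $E(Y) \subseteq E(Z)$. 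So the only genuine work is to show that $E(Z)$ is closed under addition, i.e. that $E(Z) + E(Z) \subseteq E(Z)$; but each $E(Z)$ is a linear subspace of $Z$ (the Cesàro-type averaging operator $x \mapsto \frac{1}{q(n)-p(n)}\sum_{k=p(n)+1}^{q(n)} x^{(k)}$ is linear, and the conditions "converges", "converges weakly", "is weakly Cauchy", "is bounded" are each preserved under sums), so this is immediate. Concretely: given $z = x + y$ with $x \in E(X)$, $y \in E(Y)$, write $z^{(k)} = x^{(k)} + y^{(k)}$, hence $\frac{1}{q(n)-p(n)}\sum_{k} z^{(k)} = \frac{1}{q(n)-p(n)}\sum_{k} x^{(k)} + \frac{1}{q(n)-p(n)}\sum_{k} y^{(k)}$, and take limits (resp. weak limits, resp. invoke boundedness) in $Z$, using that the two summands converge to $x$ and $y$ in $Z$ by continuity of the inclusions.

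For $E = D_{p}^{q}S$: if $\frac{1}{q(n)-p(n)}\sum_{k} x^{(k)} \to x$ in $X$ then it converges to $x$ in $Z$ (continuity of $i: X \to Z$), likewise for $y$, so the sum converges to $x+y=z$ in $Z$, i.e. $z \in D_{p}^{q}S(Z)$. For $E = D_{p}^{q}W$: the inclusion $i : X \to Z$ is weakly continuous (a continuous linear map between locally convex spaces is weakly continuous, cf. (4.0.11) of \cite{wilansky}), so weak convergence in $X$ passes to weak convergence in $Z$; add the two weak limits. For $E = D_{p}^{q}B$: boundedness of $\{\frac{1}{q(n)-p(n)}\sum_{k} x^{(k)}\}$ in $X$ gives boundedness in $Z$ since $i$ is continuous and continuous linear maps send bounded sets to bounded sets; the sum of two bounded sequences in the TVS $Z$ is bounded; and $z \in Z$ since $x, y \in Z$, so $z \in D_{p}^{q}B(Z) = D_{p}^{q}B^+(Z) \cap Z$. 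For $E = D_{p}^{q}F$: use the characterization $D_{p}^{q}F^+(Z) = \{z : (z_n f(\delta^n)) \in \sigma_{p}^{q}[s] \text{ for all } f \in Z'\}$. For $f \in Z'$ we have $f|_X \in X'$ and $f|_Y \in Y'$ (again by Corollary 4.2.4 of \cite{wilansky}, since $X, Y \subset Z$ continuously), so $(x_n f(\delta^n)) \in \sigma_{p}^{q}[s]$ and $(y_n f(\delta^n)) \in \sigma_{p}^{q}[s]$; since $\sigma_{p}^{q}[s]$ is a (BK-)space, their sum $(z_n f(\delta^n)) = (x_n f(\delta^n)) + (y_n f(\delta^n))$ lies in $\sigma_{p}^{q}[s]$, whence $z \in D_{p}^{q}F^+(Z)$, and $z \in Z$, so $z \in D_{p}^{q}F(Z)$.

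The step that looks like it requires the most care — and the one I would be most worried about — is the continuity of the inclusions $X \hookrightarrow Z$ and $Y \hookrightarrow Z$ and, relatedly, the assertion that $Z = X + Y$ with the unrestricted inductive limit topology really is an FK-space whose coordinate maps are continuous; but this is precisely what is recorded just before the theorem (Theorem 4.5.1 of \cite{wilansky}), so I would simply cite it. Once those inclusions and restrictions of functionals are in hand, every case is a one-line verification of the form "sum of two convergent/bounded things is convergent/bounded," so the proof is genuinely short. A reasonable write-up would handle $D_{p}^{q}S$ and $D_{p}^{q}W$ together via the averaging identity $z^{(k)} = x^{(k)} + y^{(k)}$ and continuity (resp. weak continuity) of $i$, then handle $D_{p}^{q}F$ and $D_{p}^{q}B$ together via the $\sigma_{p}^{q}[s]$- and $\sigma_{p}^{q}[b]$-characterizations and the fact that $Z' $ restricts into $X'$ and $Y'$.
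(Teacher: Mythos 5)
Your proposal is correct, and at its core it verifies the same thing the paper does: decompose $z=x+y$, note $z^{(k)}=x^{(k)}+y^{(k)}$, and pass the relevant property (convergence, weak convergence, weak Cauchyness, boundedness) from $X$ and $Y$ to $Z$. The organizational route differs slightly and is worth noting. The paper argues directly: for $E=D_{p}^{q}S$ it uses the explicit inductive-limit paranorm $\tau(z)=\inf\{\rho(x)+s(y): x+y=z\}$ of $Z=X+Y$ to get the estimate $\tau\bigl(\frac{1}{q(n)-p(n)}\sum_{k}(x+y)^{(k)}-(x+y)\bigr)\leq \rho(\cdots-x)+s(\cdots-y)\to 0$, and for $E=D_{p}^{q}W$ it restricts $f\in Z'$ to $X'$ and $Y'$ and adds the two limits, leaving $D_{p}^{q}F$ and $D_{p}^{q}B$ to the reader. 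You instead reduce the whole theorem to two facts: $E(X)\subseteq E(Z)$ and $E(Y)\subseteq E(Z)$ by Theorem \ref{monotone} (legitimate, since $X\subset Z$ and $Y\subset Z$ are FK-spaces and the inclusions are continuous by Corollary 4.2.4 of \cite{wilansky}), together with the observation that each $E(Z)$ is a linear subspace of $Z$, so $E(X)+E(Y)\subseteq E(Z)+E(Z)=E(Z)$. This packaging treats all four cases uniformly, supplies the $D_{p}^{q}F$ and $D_{p}^{q}B$ details the paper omits (via the $\sigma_{p}^{q}[s]$- and $\sigma_{p}^{q}[b]$-characterizations and restriction of functionals, exactly in the paper's spirit), and avoids any appeal to the explicit paranorm formula; the paper's direct estimate, on the other hand, is more self-contained for the $D_{p}^{q}S$ case since it only uses the definition of $\tau$. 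Either way the proof is complete, so no gaps.
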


\begin{proof}
Let $E=D_{p}^{q}S$. We take $x\in D_{p}^{q}S(X)$ and $y\in D_{p}^{q}S(Y)$. Then 
\begin{equation*}
\rho\left( \frac{1}{q(n)-p(n)}\sum_{k=p(n)+1}^{q(n)}x^{(k)}-x \right)\to 0 ~~ \text{  and  } ~~ s\left( \frac{1}{q(n)-p(n)}\sum_{k=p(n)+1}^{q(n)}y^{(k)}-y  \right)\to 0
\end{equation*}	
as $n\to\infty$. Hence 
\begin{eqnarray}
&&r\left( \frac{1}{q(n)-p(n)}\sum_{k=p(n)+1}^{q(n)}(x+y)^{(k)}-(x+y) \right) \nonumber \\
&&\leq \rho\left( \frac{1}{q(n)-p(n)}\sum_{k=p(n)+1}^{q(n)}x^{(k)}-x \right)+s\left( \frac{1}{q(n)-p(n)}\sum_{k=p(n)+1}^{q(n)}y^{(k)}-y  \right)\nonumber
\end{eqnarray}
which implies that $x+y\in D_{p}^{q}S(Z) $.

Let $E=D_{p}^{q}W$. We take $x\in D_{p}^{q}W(X)$, $y\in D_{p}^{q}W(Y)$ and $f\in Z'$. Then $f|X\in X'$ and $f|Y\in Y'$.
\begin{eqnarray}
&&f(x+y)=f(x)+f(y)  \nonumber \\
&&=\lim_n \frac{1}{q(n)-p(n)}\sum_{k=p(n)+1}^{q(n)} \sum_{j=1}^{k}f(\delta^j)x_j+\lim_n \frac{1}{q(n)-p(n)}\sum_{k=p(n)+1}^{q(n)} \sum_{j=1}^{k}f(\delta^j)y_j \nonumber\\
&&=\lim_n \frac{1}{q(n)-p(n)}\sum_{k=p(n)+1}^{q(n)} \sum_{j=1}^{k}f(\delta^j)(x_j+y_j) \ . \nonumber  
\end{eqnarray}
The proofs for $E=D_{p}^{q}F$ or $D_{p}^{q}B$ are similar, so the details are omitted. 
\end{proof}	

\begin{theorem}
	\label{bigcap}
	Let $\{X^n\}_{n=1}^{\infty}$ be a sequence of FK-spaces and $Y=\underset{n}{\bigcap}X^n$. Then $E(Y)=\underset{n}{\bigcap} E(X^n)$ for $E=D_{p}^{q}S $, $D_{p}^{q}W$, $D_{p}^{q}F$ or $D_{p}^{q}B$.
\end{theorem}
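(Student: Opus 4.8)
The plan is to prove the two inclusions $E(Y)\subseteq\bigcap_n E(X^n)$ and $\bigcap_n E(X^n)\subseteq E(Y)$ separately, exploiting the explicit description of the topology on $Y=\bigcap_n X^n$ given just before the statement, namely that $Y$ is an FK-space with seminorms $\{s_{nk}\}_{n,k}$ (and paranorm $s=\sum_n \rho_n/(2^n(1+\rho_n))$). The first inclusion is almost immediate from monotonicity: since $Y\subseteq X^n$ for each $n$, Theorem \ref{monotone} gives $E(Y)\subseteq E(X^n)$ for every $n$, hence $E(Y)\subseteq\bigcap_n E(X^n)$. This handles $E=D_p^qS,\,D_p^qW,\,D_p^qF,\,D_p^qB$ at once, since all six subspaces in Theorem \ref{monotone} are monotone.

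For the reverse inclusion the idea is that convergence in $Y$ is exactly simultaneous convergence in all the $X^n$. Concretely, suppose $x\in\bigcap_n E(X^n)$ and put $t_m=\frac{1}{q(m)-p(m)}\sum_{k=p(m)+1}^{q(m)}x^{(k)}$. For $E=D_p^qS$ we know $t_m\to x$ in each $X^n$, i.e.\ $\rho_n(t_m-x)\to 0$ for every $n$; I would then argue that $s(t_m-x)=\sum_n \rho_n(t_m-x)/(2^n(1+\rho_n(t_m-x)))\to 0$. This is the standard estimate: given $\varepsilon>0$ choose $N$ with $\sum_{n>N}2^{-n}<\varepsilon/2$, bound the tail of the series by $\varepsilon/2$ uniformly in $m$, and use $\rho_n(t_m-x)\to 0$ for the finitely many $n\le N$ to make the head small for $m$ large. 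Hence $t_m\to x$ in $Y$, so $x\in D_p^qS(Y)$. For $E=D_p^qW$ one uses that $f\in Y'$; by the structure of the dual of a countable intersection (or simply: $Y\supseteq\bigcap_{n\le N}X^n$ continuously and $f$ is continuous on $Y$), every $f\in Y'$ is, after restriction, continuous on some finite intersection and therefore a finite sum of functionals pulled back from the individual $X^n$'s; since each such functional witnesses the defining limit for $x$ in the corresponding $X^n$, it does so for $x$ in $Y$. For $E=D_p^qF,\,D_p^qB$ one uses the characterization $D_p^qF^+(X)=\{x:(x_jf(\delta^j))\in\sigma_p^q[s]\text{ for all }f\in X'\}$, $D_p^qB^+(X)=\{x:(x_jf(\delta^j))\in\sigma_p^q[b]\text{ for all }f\in X'\}$, and the same fact that $Y'$ is built from the $\bigcup_n (X^n)'$; then intersecting with $Y$ gives the result via $D_p^qF=D_p^qF^+\cap Y$ and $D_p^qB=D_p^qB^+\cap Y$.

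The main obstacle is the dual-space argument needed for $D_p^qW$, $D_p^qF$ and $D_p^qB$: one must know precisely that a continuous linear functional on $Y=\bigcap_n X^n$ is a finite linear combination of functionals each continuous on a single $X^n$. This is a standard fact about projective limits / Fréchet spaces formed as countable intersections of FK-spaces, and it can be cited from Wilansky (the analogue of the representation of $(\bigcap X^n)'$), or proved directly: continuity of $f$ on $Y$ means $|f(z)|\le C(\rho_{n_1}(z)+\cdots+\rho_{n_r}(z))$ for some finite set of indices, so $f$ factors through the FK-space $\bigcap_{i=1}^r X^{n_i}$, and on a finite intersection the dual is the (algebraic) sum of the $X^{n_i}$-duals. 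Once this is in hand the rest is routine, and since the paper explicitly says ``The proofs for $E=D_p^qF$ or $D_p^qB$ are similar, so the details are omitted'' in the neighboring Theorem \ref{E(X)}, I would likewise write the $D_p^qS$ case in full (using the seminorm/paranorm description of $Y$), give the $D_p^qW$ case with the dual-space remark, and state that $D_p^qF$, $D_p^qB$ follow identically from the $(\cdot)^{d}$, $(\cdot)^{d[b]}$ descriptions.
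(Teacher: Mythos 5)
Your proposal is correct and follows essentially the same route as the paper: the inclusion $E(Y)\subseteq\bigcap_n E(X^n)$ by monotonicity (Theorem \ref{monotone}), and the reverse inclusion using the seminorm description of $Y=\bigcap_n X^n$ for the convergence/boundedness cases and the representation of $f\in Y'$ as a finite sum $f=\sum_{j} f_j$ with $f_j\in (X^j)'$ for the weak cases. The only cosmetic differences are that you verify $D_{p}^{q}S$ via the paranorm with the standard $2^{-n}$ tail estimate where the paper argues directly with the seminorms $s_{nk}$, and you treat $D_{p}^{q}B$ through the dual characterization rather than seminorm boundedness; both variants are equivalent to the paper's argument.
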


\begin{proof}
	By Theorem \ref{monotone}, for each $n$, $E(Y)\subseteq E(X^n)$, hence $E(Y)\subseteq \underset{n}{\bigcap} E(X^n)$ for $E=D_{p}^{q}S $, $D_{p}^{q}W$, $D_{p}^{q}F$ or $D_{p}^{q}B$.
	
	Let $z\in \underset{n}{\bigcap} D_{p}^{q}S(X^n)$. Then $s_{nk}\left( \frac{1}{q(r)-p(r)}\sum_{n=p(r)+1}^{q(r)}z^{(n)}-z \right)\to 0$, $r\to\infty$, for each fixed $n$ and $k$, but these are the seminorms for $Y$. Hence 
	\begin{equation*}
	\lim_{r\to\infty}\frac{1}{q(r)-p(r)}\sum_{n=p(r)+1}^{q(r)}z^{(n)}=z  \text{  in  } Y
	\end{equation*}
	which implies that $z\in D_{p}^{q}S(Y) $. 
	
	Let $z\in \underset{n}{\bigcap} D_{p}^{q}W(X^n)$ and $f\in Y'$. Then we have $f=\sum_{j=1}^{h}f_j$, where $f_j\in (X^j)'$ (see \cite{wilansky4}; Sections 4.4 and 11.3). Since $f_j\left( \frac{1}{q(r)-p(r)}\sum_{n=p(r)+1}^{q(r)}z^{(n)} \right) \to f_j(z)$ for $j=1,2,\ldots,h$. Therefore,
	\begin{equation*}
	f\left( \frac{1}{q(r)-p(r)}\sum_{n=p(r)+1}^{q(r)}z^{(n)} \right) \to f(z)\ .
	\end{equation*}
	Hence $z\in D_{p}^{q}W(Y) $. 
	
	The proof for $E=D_{p}^{q}F$ is similar to previous paragraph, so we omit the details. 
	
	Let $z\in \underset{n}{\bigcap} D_{p}^{q}B(X^n)$. Then for any fixed $l$ and $k$ , $s_{lk}\left( \frac{1}{q(r)-p(r)}\sum_{n=p(r)+1}^{q(r)}z^{(n)} \right)\leq H_{lk}$ for all $r$. Hence $z\in D_{p}^{q}B(Y) $. 
\end{proof}

In \cite{sezgek}, let $X$ be an FK-space containing $\phi_1$ and 
\begin{eqnarray}
\label{gamma}
\zeta^n:=e-\frac{1}{q(n)-p(n)} \sum_{k=p(n)+1}^{q(n)}e^{(k)}   
\end{eqnarray}
\begin{eqnarray}
= \left( 0,0,\ldots,0,\frac{1}{q(n)-p(n)}, \frac{2}{q(n)-p(n)}, \frac{3}{q(n)-p(n)}, \ldots , \frac{q(n)-p(n)-1}{q(n)-p(n)},1,1 , \ldots \right) . \nonumber 
\end{eqnarray}	

If $\zeta^n\to 0$ in $X$, then $X$ is called strongly deferred Ces\`{a}ro conull, where $e^{(k)}:=\sum_{j=1}^{k}\delta^j$. If the convergence holds in the weak topology in \eqref{gamma} then $X$ is called deferred Ces\`{a}ro conull. Hence $X$ is deferred Ces\`{a}ro conull iff
\begin{equation*}
f(e) = \lim_n \frac{1}{q(n)-p(n)}\sum_{k=p(n)+1}^{q(n)}\sum_{j=1}^{k}f(\delta^j)   ~~,  ~~ \  \ \ \forall f\in X'.
\end{equation*}

Now, we define deferred Ces\`{a}ro conullity of one FK-space with respect to another. 

\begin{definition}
	\label{conull}
	Let $X$ be an FK-space with $D_{p}^{q}W(X)\neq D_{p}^{q}B(X)$ and $Y$ be an FK-space , $X\subseteq Y$. $Y$ is deferred Ces\`{a}ro conull with respect to $X$ iff $D_{p}^{q}B(X)\subseteq D_{p}^{q}W(Y)$.
\end{definition}

\begin{theorem}
	Let $X$, $Y$, $Z$ be FK-spaces with $X\subseteq Y \subseteq Z$. Then
	
	i) If  $\ Y$ is deferred Ces\`{a}ro conull with respect to $X$ then $Z$ is deferred Ces\`{a}ro conull with respect to $X$, 
	
	ii) If $Z$ is deferred Ces\`{a}ro conull with respect to $X$ and $Y$ is closed in $Z$ then $Y$ is deferred Ces\`{a}ro conull with respect to $X$.   
\end{theorem}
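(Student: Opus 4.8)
The plan is to unwind Definition~\ref{conull} in both cases and use the monotonicity results already established, together with the standard fact that a closed subspace of an FK-space is itself an FK-space whose topology is the relative topology (Corollary~4.2.4 of \cite{wilansky}).

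For part (i), I would start from the hypothesis that $Y$ is deferred Ces\`{a}ro conull with respect to $X$, i.e. $D_{p}^{q}B(X)\subseteq D_{p}^{q}W(Y)$. Since $Y\subseteq Z$, Theorem~\ref{monotone} gives $D_{p}^{q}W(Y)\subseteq D_{p}^{q}W(Z)$, and hence $D_{p}^{q}B(X)\subseteq D_{p}^{q}W(Z)$, which is exactly the assertion that $Z$ is deferred Ces\`{a}ro conull with respect to $X$. (Implicitly one should also note that $D_{p}^{q}W(X)\ne D_{p}^{q}B(X)$ is inherited as part of the standing hypothesis on $X$ from Definition~\ref{conull}, so the statement makes sense.) This direction is essentially immediate from Theorem~\ref{monotone}.

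For part (ii), the hypothesis is $D_{p}^{q}B(X)\subseteq D_{p}^{q}W(Z)$ together with $Y$ closed in $Z$, and I want $D_{p}^{q}B(X)\subseteq D_{p}^{q}W(Y)$. First, since $X\subseteq Y$, monotonicity gives $D_{p}^{q}B(X)\subseteq D_{p}^{q}B(Y)\subseteq Y$, so every $z\in D_{p}^{q}B(X)$ lies in $Y$. It remains to promote ``$z\in D_{p}^{q}W(Z)$'' to ``$z\in D_{p}^{q}W(Y)$.'' For such a $z$ and any $g\in Y'$, extend $g$ by Hahn--Banach to some $f\in Z'$ (here one uses that $Y$ carries the relative topology from $Z$, since $Y$ is closed in $Z$); then $f(\delta^j)=g(\delta^j)$ for all $j$ and $f(z)=g(z)$, and the defining relation
\begin{equation*}
f(z)=\lim_n \frac{1}{q(n)-p(n)}\sum_{k=p(n)+1}^{q(n)}\sum_{j=1}^{k}f(\delta^j)z_j
\end{equation*}
valid because $z\in D_{p}^{q}W(Z)$ transfers verbatim to $g$, giving $z\in D_{p}^{q}W(Y)$.

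The main obstacle is the small point hidden in part (ii): one must be sure that the topology $Y$ inherits as a closed subspace of the FK-space $Z$ is the same FK-topology relevant to computing $D_{p}^{q}W(Y)$ and $Y'$. This is precisely where ``$Y$ closed in $Z$'' is used — a closed subspace of an FK-space is an FK-space in the relative topology — and it is what lets the Hahn--Banach extension argument go through cleanly; without closedness the restriction map $Z'\to Y'$ need not be onto in the relevant sense. Everything else is a routine application of Theorem~\ref{monotone} and the characterization of $D_{p}^{q}W$ via the dual.
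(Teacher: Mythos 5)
Your proposal is correct. Part (i) is exactly the paper's (implicit) argument: the paper disposes of the whole theorem with the remark that it is ``clear by Definition~\ref{conull} and Theorem~\ref{monotone}'', and for (i) monotonicity of $D_{p}^{q}W$ together with $D_{p}^{q}B(X)\subseteq D_{p}^{q}W(Y)\subseteq D_{p}^{q}W(Z)$ is indeed all there is. For part (ii), however, you supply a genuine argument where the paper supplies none, and your route is the right one: monotonicity by itself only gives $D_{p}^{q}W(Y)\subseteq D_{p}^{q}W(Z)$, which points the wrong way, so something extra is needed to pass from weak convergence in $Z$ to weak convergence in $Y$. Your use of closedness --- $Y$ closed in $Z$ forces the FK-topology of $Y$ to coincide with the relative topology (uniqueness of FK topologies, Corollary 4.2.4 of \cite{wilansky}), so every $g\in Y'$ extends by Hahn--Banach to $f\in Z'$ with $f(\delta^j)=g(\delta^j)$ and $f(z)=g(z)$, and the defining limit relation for $D_{p}^{q}W(Z)$ transfers to $g$ --- is precisely where the hypothesis is consumed, and your preliminary step $D_{p}^{q}B(X)\subseteq D_{p}^{q}B(Y)\subseteq Y$ correctly guarantees that the candidate limit already lies in $Y$. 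So your write-up is not only correct but more complete than the paper's; the only cosmetic point is that the standing assumption $D_{p}^{q}W(X)\neq D_{p}^{q}B(X)$ of Definition~\ref{conull} concerns $X$ alone, as you note, so no further checking is needed there.
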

The proof of Theorem is clear by Definition \ref{conull} and Theorem \ref{monotone}. 
\begin{theorem}
	Let $\{Y^n\}_{n=1}^{\infty}$ be FK-spaces such that each $Y^n$ is deferred Ces\`{a}ro conull with respect to $X$. Then $\underset{n}{\bigcap}Y^n$ is deferred Ces\`{a}ro conull with respect to $X$. 
\end{theorem}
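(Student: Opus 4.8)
The plan is to combine Definition \ref{conull} with Theorem \ref{bigcap}. Write $Y=\underset{n}{\bigcap}Y^n$. By hypothesis each $Y^n$ is an FK-space with $X\subseteq Y^n$, so $X\subseteq Y$, and $Y$ is an FK-space (it is the intersection of FK-spaces, with the paranorm and seminorms described before Theorem \ref{E(X)}). Also the standing hypothesis $D_{p}^{q}W(X)\neq D_{p}^{q}B(X)$ carried by $X$ is exactly what Definition \ref{conull} requires, so the statement ``$Y$ is deferred Ces\`{a}ro conull with respect to $X$'' is meaningful and unpacks to the single inclusion $D_{p}^{q}B(X)\subseteq D_{p}^{q}W(Y)$.

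First I would record that, since each $Y^n$ is deferred Ces\`{a}ro conull with respect to $X$, Definition \ref{conull} gives $D_{p}^{q}B(X)\subseteq D_{p}^{q}W(Y^n)$ for every $n$. Intersecting over $n$ yields
\begin{equation*}
D_{p}^{q}B(X)\subseteq \bigcap_n D_{p}^{q}W(Y^n).
\end{equation*}
Next I would invoke Theorem \ref{bigcap} with $E=D_{p}^{q}W$, which gives $\bigcap_n D_{p}^{q}W(Y^n)=D_{p}^{q}W\big(\bigcap_n Y^n\big)=D_{p}^{q}W(Y)$. Combining the last two displays gives $D_{p}^{q}B(X)\subseteq D_{p}^{q}W(Y)$, which by Definition \ref{conull} is precisely the assertion that $Y=\underset{n}{\bigcap}Y^n$ is deferred Ces\`{a}ro conull with respect to $X$.

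There is essentially no hard step here: the whole argument is a one-line chase once Theorem \ref{bigcap} is in hand, and the only thing to be careful about is the bookkeeping—checking that $Y$ genuinely is an FK-space containing $X$ and that the defining inequality $D_{p}^{q}W(X)\neq D_{p}^{q}B(X)$ (which pertains to $X$, not to the $Y^n$ or to $Y$) is preserved so that Definition \ref{conull} applies verbatim. The mild subtlety, if any, is that Theorem \ref{bigcap} is stated for a countable family $\{X^n\}_{n=1}^{\infty}$, which matches the countable family $\{Y^n\}_{n=1}^{\infty}$ here, so no extension of that theorem is needed.
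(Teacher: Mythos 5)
Your proposal is correct and follows exactly the route the paper intends: it unpacks Definition \ref{conull} for each $Y^n$, intersects, and applies Theorem \ref{bigcap} with $E=D_{p}^{q}W$, which is precisely what the paper means by its one-line remark that the result is obtained from Definition \ref{conull} and Theorem \ref{bigcap}. No gaps; your extra bookkeeping about $Y$ being an FK-space containing $X$ and the condition $D_{p}^{q}W(X)\neq D_{p}^{q}B(X)$ pertaining only to $X$ is a welcome clarification.
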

The proof of Theorem is obtained by Definition \ref{conull} and Theorem \ref{bigcap}. 

Let $E(c_A)=E(A)$ for $A$ a matrix and $E= D_{p}^{q}W$ or $D_{p}^{q}B$ and $\mu_A (\lim_D)=\mu_A(D)$. For many cases the following theorem gives an equivalence between Wilansky's and our extensions of deferred Ces\`{a}ro conullty. 

\begin{theorem}
	Let $A$ and $D$ be matrices with $D_{p}^{q}W(A)\neq D_{p}^{q}B(A)$ and $c_A\subset c_D$. $\mu_A(D)=0$ if and only if $c_D$ is deferred Ces\`{a}ro conull with respect to $c_A$. 
\end{theorem}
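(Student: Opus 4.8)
The plan is to reduce both implications to one description of $D_p^q W$ of a matrix domain. Throughout write $\tau_n(x)=\frac{1}{q(n)-p(n)}\sum_{k=p(n)+1}^{q(n)}x^{(k)}$, and for a matrix $E$ (we only need $E=A$ and $E=D$, both assumed to have convergent columns so that $c_E\supset\phi$) put $\ell_j^E:=\lim_E\delta^j$. The auxiliary claim I would establish first is:
\begin{equation*}
D_p^q W(c_E)=\Big\{\,z\in D_p^q B(c_E)\ :\ \frac{1}{q(n)-p(n)}\sum_{k=p(n)+1}^{q(n)}\sum_{j=1}^{k}z_j\ell_j^E\longrightarrow \lim_E z\,\Big\}.
\end{equation*}

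To see this, recall $z\in D_p^q W(c_E)$ means $z\in c_E$ and $g(\tau_n z)\to g(z)$ for all $g\in c_E'$, and that $D_p^q W(c_E)\subseteq D_p^q B(c_E)$ by Theorem \ref{inclusion}. Given the representation $g(x)=\langle\gamma,x\rangle+\sum_m s_m(Ex)_m+\lambda\lim_E x$ ($\gamma\in c_E^\beta$, $s\in l$) recalled above, the finite-sum identity $\langle\gamma,\tau_n z\rangle=\frac{1}{q(n)-p(n)}\sum_k\langle\gamma,z^{(k)}\rangle$ exhibits $\langle\gamma,\tau_n z\rangle$ as a deferred Ces\`aro average of $(\langle\gamma,z^{(k)}\rangle)_k$, which converges to $\langle\gamma,z\rangle$ because $\gamma\cdot z\in cs$; by regularity of $D_{p,q}$ we get $\langle\gamma,\tau_n z\rangle\to\langle\gamma,z\rangle$. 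Likewise $(E\tau_n z)_m\to(Ez)_m$ for each $m$, and since $z\in D_p^q B(c_E)$ gives $\sup_n\sup_m|(E\tau_n z)_m|<\infty$, dominated convergence ($s\in l$) yields $\sum_m s_m(E\tau_n z)_m\to\sum_m s_m(Ez)_m$. Hence the only possible obstruction to $g(\tau_n z)\to g(z)$ is the $\lambda\lim_E$-term, and as $\lambda$ ranges over all scalars this is exactly the displayed condition. (As a by-product: if $E$ is not $\mu$-unique the displayed condition is automatic, so $D_p^q W(c_E)=D_p^q B(c_E)$; hence the hypothesis $D_p^q W(A)\neq D_p^q B(A)$ already forces $A$ to be $\mu$-unique, so $\mu_A(D)$ is well defined.)

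Next I would insert the $\mu$-coefficient. Since $c_A\subset c_D$, the restriction $\lim_D|_{c_A}$ lies in $c_A'$, so $\lim_D z=\langle\beta,z\rangle+\sum_m t_m(Az)_m+\mu\lim_A z$ for $z\in c_A$, with $\beta\in c_A^\beta$, $t\in l$ and $\mu=\mu_A(D)$; evaluating at $\delta^j$ gives $\ell_j^D=\beta_j+\sum_m t_m a_{mj}+\mu\ell_j^A$. Substituting and regrouping, for $z\in D_p^q B(c_A)$ (which also lies in $D_p^q B(c_D)$ by Theorem \ref{monotone}),
\begin{equation*}
\frac{1}{q(n)-p(n)}\sum_{k}\sum_{j=1}^{k}z_j\ell_j^D=\Big(\langle\beta,\tau_n z\rangle+\sum_m t_m(A\tau_n z)_m\Big)+\mu\cdot\frac{1}{q(n)-p(n)}\sum_{k}\sum_{j=1}^{k}z_j\ell_j^A,
\end{equation*}
and the parenthesized term tends to $\langle\beta,z\rangle+\sum_m t_m(Az)_m=\lim_D z-\mu\lim_A z$ by exactly the regularity/dominated-convergence argument used above (legitimate because $z\in D_p^q B(c_A)$). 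Combining this with the auxiliary claim for $E=D$, we obtain, for every $z\in D_p^q B(c_A)$,
\begin{equation*}
z\in D_p^q W(c_D)\ \Longleftrightarrow\ \mu\Big(\frac{1}{q(n)-p(n)}\sum_{k}\sum_{j=1}^{k}z_j\ell_j^A-\lim_A z\Big)\longrightarrow 0 .
\end{equation*}

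Finally I would split on $\mu=\mu_A(D)$. If $\mu=0$, the right-hand side holds trivially, so $D_p^q B(c_A)\subseteq D_p^q W(c_D)$, i.e.\ $c_D$ is deferred Ces\`aro conull with respect to $c_A$. If $\mu\neq0$, the condition is equivalent to $\frac{1}{q(n)-p(n)}\sum_k\sum_{j\le k}z_j\ell_j^A\to\lim_A z$, which by the auxiliary claim for $E=A$ says precisely $z\in D_p^q W(c_A)$; so $D_p^q B(c_A)\subseteq D_p^q W(c_D)$ would entail $D_p^q B(c_A)=D_p^q W(c_A)$, contradicting the hypothesis $D_p^q W(A)\neq D_p^q B(A)$, and hence $c_D$ is not deferred Ces\`aro conull with respect to $c_A$. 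The step I expect to require the most care is the auxiliary claim --- specifically the dominated-convergence passage for the $\sum_m s_m(E\cdot)_m$ part --- because it is where the uniform bound $\sup_n\sup_m|(E\tau_n z)_m|<\infty$ must be extracted from $z\in D_p^q B(c_E)$; this is also the structural reason the conullity notion is phrased with $D_p^q B(c_A)$ in place of $c_A$.
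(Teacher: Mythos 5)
Your proof is correct, but it follows a more self-contained route than the paper's. The paper's argument leans on outside machinery at exactly the places you work out by hand: for the direction ``conull $\Rightarrow\ \mu_A(D)=0$'' it picks $z\in D_{p}^{q}B(A)\setminus D_{p}^{q}W(A)$ and cites Theorem 4.2 of the companion paper (Sezgek--Da\v{g}adur) for the fact that $\lim{_A}\bigl(z-\frac{1}{q(n)-p(n)}\sum_{k=p(n)+1}^{q(n)}z^{(k)}\bigr)\nrightarrow 0$, i.e.\ precisely the $E=A$ case of your auxiliary claim; for the converse it invokes Theorem 5.2 of Wilansky's $\mu$-property paper to get $\mu_A(f)=\mu_D(f)\mu_A(D)=0$ for \emph{every} $f\in(c_D)'$, and Corollary 12.5.9 of Wilansky's book to reduce such $f$ on $D_{p}^{q}B(A)$ to a coordinate-type functional $\gamma=tA+\beta$. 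You instead prove once, from the functional representation on $c_E$ plus regularity of $D_{p,q}$ and a dominated-convergence step (legitimately extracted from the $\rho_0$-boundedness contained in $z\in D_{p}^{q}B(c_E)$), the characterization $D_{p}^{q}W(c_E)=\{z\in D_{p}^{q}B(c_E):\lim{_E}\tau_nz\to\lim{_E}z\}$, apply it to both $E=A$ and $E=D$, and then need only decompose the single functional $\lim{_D}$ over $c_A$ --- thereby bypassing the $\mu$-multiplicativity theorem entirely. Your parenthetical observation that $D_{p}^{q}W(A)\neq D_{p}^{q}B(A)$ already forces $A$ to be $\mu$-unique (so that $\mu_A(D)$ is well defined) is a point the paper leaves implicit. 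What each approach buys: the paper's proof is shorter given the cited results, while yours is essentially self-contained, makes the role of the hypothesis $D_{p}^{q}W(A)\neq D_{p}^{q}B(A)$ transparent in both directions, and isolates the one genuinely delicate analytic step (the $\sum_m s_m(E\,\cdot)_m$ term) explicitly.
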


\begin{proof}
Let $c_D$ is deferred Ces\`{a}ro conull with respect to $c_A$. For $x\in D_{p}^{q}B(A)$, $\lim_Dx=\mu_A(D)\lim_Ax+\beta x$. 

Now let $z\in D_{p}^{q}B(A) \setminus D_{p}^{q}W(A) $. Firstly, by Theorem 4.2 of \cite{sezgek} we have 
\begin{equation*}
\lim{_A}  \left( z-\frac{1}{q(n)-p(n)}\sum_{k=p(n)+1}^{q(n)}z^{(k)} \right)\nrightarrow 0 \ .
\end{equation*}	
Also, since $D_{p}^{q}B(A)\subset c_A$ and $\gamma\in c_A^{\beta}$ we obtained 
\begin{eqnarray}
\gamma \left(  z-\frac{1}{q(n)-p(n)}\sum_{k=p(n)+1}^{q(n)}z^{(k)} \right)&=&\gamma \left(  z-\frac{1}{q(n)-p(n)}\sum_{k=p(n)+1}^{q(n)}\sum_{j=1}^{k}z_j\delta^j \right) \nonumber \\
&=& \frac{1}{q(n)-p(n)}\sum_{k=p(n)+1}^{q(n)}\sum_{j=k+1}^{\infty}z_j\delta^j \ . \nonumber
\end{eqnarray}
By hypothesis, for each $f\in (c_D)'$, 
\begin{equation*}
 f\left(  z-\frac{1}{q(n)-p(n)}\sum_{k=p(n)+1}^{q(n)}z^{(k)} \right)\to 0 \ .
\end{equation*}
In particular, we take $f=\lim_D\in (c_D)'$. Thus $\lim_D\left(  z-\frac{1}{q(n)-p(n)}\sum_{k=p(n)+1}^{q(n)}z^{(k)} \right)\to 0 $.

Conversely let $f\in (c_D)' $. By Theorem 5.2 of \cite{wilansky4} we have $\mu_A(f)=\mu_D(f).\mu_A(D)=0$. Hence $f(x)=t(Ax)+\beta x$ for $x\in c_A$. 

Now we are able to write 
\begin{equation*}
f(x)=\mu_A(f)\lim{_A}x+\beta x
\end{equation*}
with $\gamma=tA+\beta $ for $x\in D_{p}^{q}B$ by Corollary 12.5.9 of \cite{wilansky}. Therefore, we get $f(x)=\gamma x$ for $x\in D_{p}^{q}B(A)$ which implies that $x\in D_{p}^{q}W(D)$. So $c_D$ is deferred Ces\`{a}ro conull with respect to $c_A$. 
\end{proof}

We establish some relations among the subspaces $D_{p}^{q}S$, $D_{p}^{q}W$, $D_{p}^{q}F$, $D_{p}^{q}F^+$.

\begin{remark}
	\label{weakly}
	Let $X$ be an FK-space such that weakly convergent sequences are convergent in the FK-topology, $A$ be a matrix such that $X_A\supset \phi$. The subspaces $D_{p}^{q}S$, $D_{p}^{q}W$ and $D_{p}^{q}F$ are calculated in $X_A$.  
\end{remark}

\begin{lemma}
	If $X$ is as in Remark \ref{weakly}, then for $X$ itself, we have $D_{p}^{q}S=D_{p}^{q}W=D_{p}^{q}F=D_{p}^{q}F^+$.
\end{lemma}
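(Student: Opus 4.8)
The plan is to use the inclusion chain already established and then close the loop with the special hypothesis on $X$. By Theorem~\ref{inclusion} we have $D_{p}^{q}S\subseteq D_{p}^{q}W\subseteq D_{p}^{q}F$, and by Definition~\ref{subseq}, $D_{p}^{q}F=D_{p}^{q}F^{+}\cap X\subseteq D_{p}^{q}F^{+}$. So it suffices to prove the two inclusions $D_{p}^{q}F^{+}\subseteq X$ (which then forces $D_{p}^{q}F=D_{p}^{q}F^{+}$) and $D_{p}^{q}F\subseteq D_{p}^{q}S$; together with the chain these collapse all four spaces to one.

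For $x\in w$ set $y_{n}=\frac{1}{q(n)-p(n)}\sum_{k=p(n)+1}^{q(n)}x^{(k)}\in\phi\subseteq X$. First I would record the elementary fact that $y_{n}\to x$ \emph{coordinatewise}: for each fixed $i$ one computes $(y_{n})_{i}=c_{n,i}x_{i}$, where $c_{n,i}$ is the proportion of indices $k\in\{p(n)+1,\dots,q(n)\}$ with $k\ge i$, and since $q(n)\to\infty$ one gets $1-\tfrac{i-1}{q(n)}\le c_{n,i}\le 1$ once $q(n)\ge i$, hence $c_{n,i}\to 1$. Second --- and this is the heart of the argument --- I would prove that under the hypothesis of Remark~\ref{weakly} every weakly Cauchy sequence in $X$ is Cauchy in the FK-topology, hence convergent by completeness of $X$. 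The proof is the standard Schur-type contradiction: if $(z_{m})$ were weakly Cauchy but not Cauchy, then for some $0$-neighbourhood $U$ one could pick pairs of indices $m_{k},m_{k}'\to\infty$ with $z_{m_{k}}-z_{m_{k}'}\notin U$, while $z_{m_{k}}-z_{m_{k}'}\to 0$ weakly (being differences from a weakly Cauchy sequence); this contradicts that weakly convergent --- here weakly null --- sequences converge in $X$.

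Finally I would combine these. If $x\in D_{p}^{q}F^{+}(X)$, then by the second description of $D_{p}^{q}F^{+}$ in Definition~\ref{subseq} the averages $(y_{n})$ are weakly Cauchy in $X$, so by the lemma just proved $y_{n}\to x'$ for some $x'\in X$; since the coordinate functionals are continuous and $y_{n}\to x$ coordinatewise, $x'=x$, whence $x\in X$ and moreover $y_{n}\to x$ in $X$, i.e.\ $x\in D_{p}^{q}S(X)$. This simultaneously yields $D_{p}^{q}F^{+}\subseteq X$ and $D_{p}^{q}F^{+}\cap X=D_{p}^{q}F\subseteq D_{p}^{q}S$, which closes the loop. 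The only genuine obstacle is the step upgrading ``weakly Cauchy'' to ``convergent with limit exactly $x$''; the hypothesis of Remark~\ref{weakly} is used precisely there to obtain convergence, while identifying the limit as $x$ rests only on coordinatewise convergence of the deferred Ces\`{a}ro averages, which uses nothing beyond $q(n)\to\infty$.
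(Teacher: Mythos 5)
Your proof is correct and follows essentially the same route as the paper: the chain $D_{p}^{q}S\subset D_{p}^{q}W\subset D_{p}^{q}F\subset D_{p}^{q}F^{+}$ is noted to be trivial, and the reverse inclusion $D_{p}^{q}F^{+}\subset D_{p}^{q}S$ is obtained by upgrading the weakly Cauchy averaged sections to FK-Cauchy (hence convergent) and identifying the limit with $x$ through the coordinate functionals. The only cosmetic difference is that you prove the weak-Cauchy-to-Cauchy step directly (the paper simply cites (12.0.1) of Wilansky) and make the coordinatewise convergence of the deferred Ces\`{a}ro averages explicit, which is fine.
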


\begin{proof}
	The inclusions $D_{p}^{q}S\subset D_{p}^{q}W\subset D_{p}^{q}F\subset D_{p}^{q}F^+$ are trivial by definitions. Conversely, if $x\in D_{p}^{q}F^+$, then $\left\{ \frac{1}{q(n)-p(n)}\sum_{k=p(n)+1}^{q(n)}x^{(k)} \right\}$ is weakly Cauchy, hence by (12.0.1) of \cite{wilansky} is Cauchy in the FK-topology of $X$, so convergent, say \begin{equation*}
	\left\{ \frac{1}{q(n)-p(n)}\sum_{k=p(n)+1}^{q(n)}x^{(k)} \right\}\to y \ .
	\end{equation*}
	Since $x^{(k)}\to x$ in $w$, we have \begin{equation*}
	\left\{ \frac{1}{q(n)-p(n)}\sum_{k=p(n)+1}^{q(n)}x^{(k)} \right\}\to x \ ~~ \text{  in } w \ .
	\end{equation*}
	 By the contiunity of $i:X\to w$, $y=x$, and $x\in D_{p}^{q}S$.
\end{proof}	

Now we note that if $X$ is an FK-space containing $\phi_1$,  then \begin{equation}
\label{fdual}
D_{p}^{q}F^+=X^{fd} \ .
\end{equation}
To see this, it is enough to take $\sigma_{p}^{q} [s]$ instead of $cs$ in Theorem 10.4.2 of \cite{wilansky}. If $X$ is also $\sigma_{p}^{q} [K]$, $X^{dd}=X$ since, by Theorem 1.9 of \cite{goes3}, $X^{dd}=X^{fd}$ and $D_{p}^{q}F^+=X^{fd}$ by \eqref{fdual}. We have $X^{dd}=D_{p}^{q}F=D_{p}^{q}F^+\subset X$, hence, the result follows. 

\begin{theorem}
	\label{equal}
	With $X$, $A$ as in Remark \ref{weakly}, for the $X_A$, we have $D_{p}^{q}S=D_{p}^{q}W=D_{p}^{q}F=D_{p}^{q}F^+$.
\end{theorem}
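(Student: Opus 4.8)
The plan is to run the argument of the Lemma inside $X_{A}$, the only extra work being to transport sequences through the continuous map $A\colon X_{A}\to X$. The inclusions $D_{p}^{q}S\subseteq D_{p}^{q}W\subseteq D_{p}^{q}F\subseteq D_{p}^{q}F^{+}$ (all computed in $X_{A}$) are immediate from the definitions, exactly as in the Lemma, so the whole content is the reverse inclusion $D_{p}^{q}F^{+}(X_{A})\subseteq D_{p}^{q}S(X_{A})$. Fix $x\in D_{p}^{q}F^{+}(X_{A})$ and set $z^{(n)}:=\frac{1}{q(n)-p(n)}\sum_{k=p(n)+1}^{q(n)}x^{(k)}\in\phi$; by definition $\{z^{(n)}\}$ is weakly Cauchy in $X_{A}$. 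The aim is to show $\{z^{(n)}\}$ actually converges in $X_{A}$: its limit is then forced to equal $x$ by continuity of the coordinate functionals, and this gives at once $x\in X_{A}$ and $x\in D_{p}^{q}S(X_{A})$.

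Next I would push everything through $A$. Since the FK-topology of $X_{A}$ makes $A\colon X_{A}\to X$ continuous, each $g\in X'$ gives $g\circ A\in (X_{A})'$, so $\{Az^{(n)}\}$ is weakly Cauchy in $X$. By the standing hypothesis on $X$ (weakly convergent sequences are convergent) together with $(12.0.1)$ of \cite{wilansky}, a weakly Cauchy sequence in $X$ is Cauchy in the FK-topology of $X$, hence convergent; say $Az^{(n)}\to v$ in $X$. Separately, $x^{(k)}\to x$ in $w$ and, each coordinate of $(x^{(k)})_{k}$ being eventually constant, the deferred Ces\`{a}ro means give $z^{(n)}\to x$ in $w$.

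It remains to deduce $z^{(n)}\to x$ in $X_{A}$, and this is the crux. Convergence in $X_{A}$ follows once one has convergence in each of the three families of seminorms defining its FK-topology: the coordinates, the pulled-back seminorms $p\circ A$ ($p$ a seminorm of $X$), and the partial-sum seminorms $h_{m}(z)=\sup_{l}\bigl|\sum_{k=1}^{l}a_{mk}z_{k}\bigr|$. The first is the coordinatewise statement and the second is $Az^{(n)}\to v$; the hard one is $h_{m}$, because $A$ is not continuous on $w$, so coordinatewise convergence of $z^{(n)}$ to $x$ says nothing about it directly. Here the key identity is
\[
\bigl(Az^{(n)}\bigr)_{m}=\frac{1}{q(n)-p(n)}\sum_{k=p(n)+1}^{q(n)}S^{(m)}_{k},\qquad S^{(m)}_{k}:=\sum_{j=1}^{k}a_{mj}x_{j},
\]
so that $(Az^{(n)})_{m}$ is the deferred Ces\`{a}ro transform of the partial-sum sequence $(S^{(m)}_{k})_{k}$. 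Using this identity, the boundedness of $\{z^{(n)}\}$ in the Fr\'echet (hence barrelled) space $X_{A}$ — which follows from weak Cauchyness — and the convergence $Az^{(n)}\to v$, the next step is to show $S^{(m)}_{k}\to v_{m}$ for every $m$ (whence $x\in X_{A}$) and $h_{m}(z^{(n)}-x)\to 0$. Granting this, $z^{(n)}\to x$ in $X_{A}$, i.e.\ $x\in D_{p}^{q}S(X_{A})$, and together with the trivial inclusions the asserted chain of equalities follows. The main obstacle is exactly this last step: controlling the Toeplitz-type seminorms $h_{m}$, for which one cannot simply quote the hypothesis on $X$ — the property ``weakly convergent sequences are convergent'' need not pass from $X$ to $X_{A}$ — but must exploit the explicit deferred Ces\`{a}ro structure of $z^{(n)}$ together with a uniform-boundedness argument.
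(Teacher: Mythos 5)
Your reduction follows the paper's route up to the decisive point: the trivial inclusions, pushing $z^{(n)}=\frac{1}{q(n)-p(n)}\sum_{k=p(n)+1}^{q(n)}x^{(k)}$ through the continuous map $A\colon X_A\to X$ to get a weakly Cauchy, hence (by the hypothesis on $X$ and (12.0.1) of \cite{wilansky}) convergent sequence $Az^{(n)}\to v$ in $X$, and the coordinatewise convergence $z^{(n)}\to x$ in $w$ are all exactly as in the paper. But your argument stops where the theorem actually lives: you write that ``the next step is to show $S^{(m)}_k\to v_m$ for every $m$ and $h_m(z^{(n)}-x)\to 0$'' and then conclude ``granting this''; no proof of that step is offered beyond an appeal to ``a uniform-boundedness argument.'' That is a genuine gap, and the route you sketch for filling it is itself doubtful: as your own identity shows, $(Az^{(n)})_m$ is the deferred Ces\`{a}ro transform of the partial sums $S^{(m)}_k$, and deferred Ces\`{a}ro summability of $S^{(m)}_k$ together with boundedness does not imply convergence of $S^{(m)}_k$ (think of $S_k=(-1)^k$), so boundedness of $\{z^{(n)}\}$ in $X_A$ plus $Az^{(n)}\to v$ cannot by themselves deliver $x\in w_A$; some further input is needed.

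The device the paper uses, and which your outline lacks, is Theorem 4.3.8 of \cite{wilansky}: $(w_A,\rho\cup h)$ is an AK-space, and since the deferred Ces\`{a}ro method is regular, AK implies the $\sigma_{p}^{q}[K]$ property; hence for $x\in w_A$ one obtains $z^{(n)}\to x$ in $w_A$, i.e.\ convergence in the coordinates and in all the seminorms $h_m$ at once, with no uniform-boundedness argument at all. Combined with $Az^{(n)}\to Ax$ in $X$ (the $X$-limit $v$ is identified with $Ax$ through the continuous embedding $X\subset w$), this gives convergence of $z^{(n)}$ to $x$ in all three families of seminorms generating the FK-topology of $X_A$, hence $x\in D_{p}^{q}S(X_A)$. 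Note that this still presupposes $x\in w_A$: in the classical (AK/$\beta$-dual) setting this is automatic, because applying the defining condition of $F^+$ to the functionals $u\mapsto (Au)_m$ gives convergence of $\sum_j a_{mj}x_j$, whereas here it gives only deferred Ces\`{a}ro convergence of the partial sums --- a subtlety the paper passes over silently and which your write-up correctly identifies as the crux, but does not resolve.
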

\begin{proof}
If $x\in D_{p}^{q}F^+$, then $\left\{ \frac{1}{q(n)-p(n)}\sum_{k=p(n)+1}^{q(n)}x^{(k)} \right\}$ is weakly Cauchy, hence by (12.0.1) of \cite{wilansky} is Cauchy in the FK-topology of $X$, so convergent. Since by Corollary 4.2.4 of \cite{wilansky} the matrix mapping $A:X_A\to X$ is continuous, $\left\{ \frac{1}{q(n)-p(n)}\sum_{k=p(n)+1}^{q(n)}Ax^{(k)} \right\}$ is convergent in $X$, say \begin{equation*}
\left\{ \frac{1}{q(n)-p(n)}\sum_{k=p(n)+1}^{q(n)}Ax^{(k)} \right\}\to y \ .
\end{equation*}
On the other hand, by Theorem 4.3.8 of \cite{wilansky} $(w_A,\rho\cup h) $ is an AK-space. Hence it is also $\sigma_{p}^{q} [K]$-space. Hence \begin{equation*}
\left\{ \frac{1}{q(n)-p(n)}\sum_{k=p(n)+1}^{q(n)}x^{(k)} \right\}\to x \ .
\end{equation*}
The matrix mapping $A:w_A\to w$ is continuous, and therefore \begin{equation*}
\left\{ \frac{1}{q(n)-p(n)}\sum_{k=p(n)+1}^{q(n)}Ax^{(k)} \right\}\to Ax \ \text{  in  } w.
\end{equation*} 
Since $X\subset w$ and $X$ is complete, $Ax=y$. We have 
\begin{equation*}
\left\{ \frac{1}{q(n)-p(n)}\sum_{k=p(n)+1}^{q(n)}Ax^{(k)} \right\}\to Ax \ \text{  in  } X.
\end{equation*} That is,
\begin{equation*}
r\left(  Ax- \frac{1}{q(n)-p(n)}\sum_{k=p(n)+1}^{q(n)}Ax^{(k)}\right)=(r\circ A)\left( x- \frac{1}{q(n)-p(n)}\sum_{k=p(n)+1}^{q(n)}x^{(k)} \right)\to 0
\end{equation*}
as $n\to\infty$, where $r$ is a typcal seminorm of $X$. Hence $x\in D_{p}^{q}S $, which proves Theorem \ref{equal}.
\end{proof}
\ 
 
\subsection{Replaceability, Deferred Ces\`{a}ro Conullity of $l_A$}
~~

Recall that given a matrix $A$ with $l_A\supset \phi$ is called $l$-replaceable if there is a matrix $D=(d_{nk})$ with $l_D=l_A$, and $\sum_nd_{nk}=1$ $k\in\mathbb{N}$ \cite{macphail}. It is easy to see that $A$ is replaceable if and only if there exists $f\in l'_A$ with $f(\delta^k)=1 \ (k\in \mathbb{N})$, namely, $f=\sum_D$.
\begin{theorem}
	Suppose that $D_{p}^{q}F=l_A$. Then $A$ is $l$-replaceable if and only if $l_A\subset \sigma_{p}^{q} [s] $. 
\end{theorem}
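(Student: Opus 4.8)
The plan is to establish both directions through the characterization of $l$-replaceability recalled just before the theorem: $A$ is $l$-replaceable if and only if there exists $f\in l_A'$ with $f(\delta^k)=1$ for all $k\in\mathbb{N}$. The hypothesis $D_{p}^{q}F = l_A$ means precisely that $l_A$ has $F\sigma_{p}^{q}[K]$, so by Theorem \ref{fk} (and the equivalences following it) $\overline{\phi}$ has $\sigma_{p}^{q}[K]$ in $l_A$ and $D_{p}^{q}S = D_{p}^{q}W = D_{p}^{q}F = l_A$; in particular every $x\in l_A$ satisfies $f(x) = \lim_n \frac{1}{q(n)-p(n)}\sum_{k=p(n)+1}^{q(n)}\sum_{j=1}^{k} x_j f(\delta^j)$ for all $f\in l_A'$.

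For the forward direction, suppose $A$ is $l$-replaceable and pick $f\in l_A'$ with $f(\delta^k)=1$ for every $k$. Since $D_{p}^{q}F = l_A$, for any $x\in l_A$ the limit $\lim_n \frac{1}{q(n)-p(n)}\sum_{k=p(n)+1}^{q(n)}\sum_{j=1}^{k} x_j f(\delta^j) = \lim_n \frac{1}{q(n)-p(n)}\sum_{k=p(n)+1}^{q(n)}\sum_{j=1}^{k} x_j$ exists and equals $f(x)$. But existence of that limit for every $x\in l_A$ is exactly the statement that $x\in\sigma_{p}^{q}[s]$ for every $x\in l_A$, i.e.\ $l_A\subset\sigma_{p}^{q}[s]$.

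For the converse, assume $l_A\subset \sigma_{p}^{q}[s]$. Then the map $g(x) := \lim_n \frac{1}{q(n)-p(n)}\sum_{k=p(n)+1}^{q(n)}\sum_{j=1}^{k} x_j$ is well-defined on all of $l_A$; it is a pointwise limit of continuous linear functionals (each partial expression is continuous on the FK-space $l_A$ since coordinates are continuous and the sums are finite), so by the Banach--Steinhaus theorem $g\in l_A'$. Computing $g(\delta^i)$ gives $\lim_n \frac{q(n)-i+1}{q(n)-p(n)} = 1$ by the standing hypothesis $\lim_{n}\frac{q(n)-i+1}{q(n)-p(n)}=1$, so $g(\delta^i)=1$ for every $i$. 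Hence the replaceability criterion is satisfied and $A$ is $l$-replaceable.

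The main obstacle is making the converse rigorous: one must verify that $l_A\subset\sigma_{p}^{q}[s]$ genuinely forces $g$ to be continuous rather than merely linear. This is handled by the closed-graph/Banach--Steinhaus argument on the FK-space $l_A$, exactly in the style of the proof of Theorem 1.1(ii) in the excerpt — the functionals $g_n(x)=\frac{1}{q(n)-p(n)}\sum_{k=p(n)+1}^{q(n)}\sum_{j=1}^{k}x_j$ are continuous on $l_A$, they converge pointwise on $l_A$ by the hypothesis, so the uniform boundedness principle for FK-spaces gives $g=\lim_n g_n\in l_A'$. Once continuity is secured, the evaluation $g(\delta^i)=1$ and the appeal to MacPhail's criterion are routine.
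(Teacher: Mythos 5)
Your proof is correct and follows essentially the same route as the paper's: the forward direction feeds the MacPhail functional with $f(\delta^j)=1$ into the definition of $D_{p}^{q}F$ to get existence of $\lim_n \frac{1}{q(n)-p(n)}\sum_{k=p(n)+1}^{q(n)}\sum_{j=1}^{k}x_j$, and the converse builds $g$ as the pointwise limit of the continuous functionals $g_n$ via Banach--Steinhaus and checks $g(\delta^i)=1$, exactly as in the paper (which leaves the continuity and the evaluation implicit). Two inessential blemishes: your preliminary claim that $D_{p}^{q}S=D_{p}^{q}W=D_{p}^{q}F=l_A$ is not justified by Theorem \ref{fk} (which together with Theorem \ref{inclusion} only gives $D_{p}^{q}S=D_{p}^{q}W=\overline{\phi}$), but you never actually use it since existence of the limit suffices; and $g(\delta^i)\to 1$ does not need the hypothesis of Theorem 1.1 as a ``standing'' assumption --- it already follows from $q(n)\to\infty$, since the relevant quotient is $\frac{q(n)-\max(p(n),i-1)}{q(n)-p(n)}$.
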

\begin{proof}
Assume that $A$ is $l$-replaceable. Then it follows from \cite{macphail} that $A$ is $l$-replaceable if and only if there is $f\in l'_A$ such that $f(\delta^j)=1$ for all $j\in \mathbb{N}$. Since $D_{p}^{q}F=l_A$, 
\begin{eqnarray}
\lim_n\frac{1}{q(n)-p(n)}\sum_{k=p(n)+1}^{q(n)}\sum_{j=1}^{k}x_jf(\delta^j)=\lim_n \frac{1}{q(n)-p(n)}\sum_{k=p(n)+1}^{q(n)}\sum_{j=1}^{k}x_j \nonumber 
\end{eqnarray}
exists for all $x\in l_A$, hence $x\in \sigma_{p}^{q} [s]$.

Conversely, if $l_A\subset \sigma_{p}^{q} [s]$ then
\begin{equation*}
f(x)= \frac{1}{q(n)-p(n)}\sum_{k=p(n)+1}^{q(n)}\sum_{j=1}^{k}x_j
\end{equation*}
 defines an element $f$ of $l'_A$, and we get $f(\delta^{\nu})=1$, $(\nu=1,2,\ldots)$.
\end{proof}	

We now establish a relation between deferred Ces\`{a}ro conullity and replaceability.
\begin{theorem}
	If $l_A$ is deferred Ces\`{a}ro conull space, then $A$ is not $l$-replaceable.
\end{theorem}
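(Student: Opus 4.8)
The statement to prove is: if $l_A$ is deferred Ces\`{a}ro conull, then $A$ is not $l$-replaceable. The natural route is a proof by contradiction combined with the characterization of replaceability recalled just before the theorem, namely that $A$ is $l$-replaceable if and only if there exists $f \in l_A'$ with $f(\delta^k)=1$ for every $k \in \mathbb{N}$ (i.e. $f = \sum_D$ for a suitable $D$ with $l_D = l_A$). So first I would assume, for contradiction, that $A$ is $l$-replaceable and fix such a functional $f \in l_A'$ with $f(\delta^k) = 1$ for all $k$.

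Next I would unpack what deferred Ces\`{a}ro conullity of $X = l_A$ means in the weak form stated in the excerpt: for every $g \in X'$,
\begin{equation*}
g(e) = \lim_n \frac{1}{q(n)-p(n)} \sum_{k=p(n)+1}^{q(n)} \sum_{j=1}^{k} g(\delta^j).
\end{equation*}
The key point is that this presupposes $e \in l_A$ (so that $g(e)$ makes sense); indeed $X$ being deferred Ces\`{a}ro conull is only defined for FK-spaces containing $\phi_1$, the span of $\phi$ and $e$. Now apply this identity to $g = f$: since $f(\delta^j) = 1$ for all $j$, the inner sum $\sum_{j=1}^{k} f(\delta^j)$ equals $k$, and then
\begin{equation*}
\frac{1}{q(n)-p(n)} \sum_{k=p(n)+1}^{q(n)} k = \frac{1}{q(n)-p(n)} \cdot \frac{(p(n)+1+q(n))(q(n)-p(n))}{2} = \frac{p(n)+q(n)+1}{2},
\end{equation*}
which diverges to $\infty$ as $n \to \infty$ since $q(n) \to \infty$. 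Hence the limit on the right-hand side does not exist (it is not finite), contradicting the fact that $f(e)$ is a finite scalar. This contradiction shows $A$ cannot be $l$-replaceable.

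The main obstacle I anticipate is not the computation but making sure the hypotheses line up: one must confirm that "$l_A$ deferred Ces\`{a}ro conull" genuinely forces $e \in l_A$ and that the weak-convergence characterization quoted in the excerpt is exactly the right tool, and also that the replaceability criterion is being invoked correctly (the functional $f$ one obtains really does live in $l_A'$ and really does satisfy $f(\delta^k) = 1$ on all of $\phi$, so that linearity and continuity let us evaluate it along the sequence $\frac{1}{q(n)-p(n)}\sum_{k}e^{(k)}$ whose terms lie in $\phi$). A minor subtlety is whether one should instead phrase the argument directly through $\zeta^n = e - \frac{1}{q(n)-p(n)}\sum_{k=p(n)+1}^{q(n)} e^{(k)}$ converging weakly to $0$: applying $f$ to $\zeta^n$ gives $f(e) - \frac{p(n)+q(n)+1}{2} \to 0$, which is again impossible. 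Either formulation closes the argument; I would present the $\zeta^n$ version since it matches the paper's own definition of deferred Ces\`{a}ro conullity and keeps the contradiction fully explicit.
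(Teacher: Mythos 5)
Your proposal is correct and follows essentially the same argument as the paper: assume $l$-replaceability, take $f\in l_A'$ with $f(\delta^j)=1$ for all $j$, and observe that the deferred Ces\`{a}ro means of $\sum_{j=1}^{k}f(\delta^j)=k$ equal $\frac{p(n)+q(n)+1}{2}\to\infty$, so they cannot converge to the finite value $f(e)$, contradicting conullity. The $\zeta^n$ phrasing you suggest at the end is exactly the computation the paper carries out.
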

\begin{proof}
Suppose that $A$ is $l$-replaceable. Then it follows from \cite{macphail} that $A$ is $l$-replaceable if and only if there is $f\in l'_A$ such that $f(\delta^{j})=1$ for all $j\in\mathbb{N}$. Hence 
\begin{eqnarray}
&&f(e)-\frac{1}{q(n)-p(n)}\sum_{k=p(n)+1}^{q(n)}\sum_{j=1}^{k} f(\delta^j) = \left( f(e)- \frac{1}{q(n)-p(n)}\sum_{k=p(n)+1}^{q(n)}\sum_{j=1}^{k}(1)   \right) \nonumber \\
&&=  \left( f(e)- \frac{1}{q(n)-p(n)}\sum_{k=p(n)+1}^{q(n)}(k)  \right) \nonumber \\
&&=\left( f(e)-\frac{1}{q(n)-p(n)} \frac{(q(n)-p(n))(q(n)+p(n)+1)}{2}  \right) \nonumber \\
&&=\left( f(e)- \frac{q(n)+p(n)+1}{2}  \right)\nrightarrow 0 \ \text{  as  } n\to\infty \ , \nonumber 
\end{eqnarray}
so, $l_A$ is not deferred Ces\`{a}ro conull.
\end{proof}

\end{document}